\let\SF@@footnote\footnote
\def\footnote{\ifx\protect\@typeset@protect
    \expandafter\SF@@footnote
  \else
    \expandafter\SF@gobble@opt
  \fi
}
\def\csname SF@gobble@opt \endcsname{\@ifnextchar[
  \SF@gobble@twobracket
  \@gobble
}
\edef\SF@gobble@opt{\noexpand\protect
  \expandafter\noexpand\csname SF@gobble@opt \endcsname}
\def\SF@gobble@twobracket[#1]#2{}
 \newcommand\thmsname{Theorem}
 \newcommand\nm@thmtype{thm}
 \theoremstyle{plain}
 \newenvironment{namedthm}[1]{
   \renewcommand\thmsname{#1}\renewcommand\nm@thmtype{namedtheorem}
   \begin{\nm@thmtype}
}
   {\end{\nm@thmtype}
}
\newtheorem{thm}{Theorem}[section]
\newtheorem*{thm*}{Theorem}
\newtheorem*{cor*}{Corollary}
\newtheorem{lem}[thm]{Lemma}
\newtheorem*{lem*}{Lemma}
\newtheorem{prop}[thm]{Proposition}
\newtheorem*{prop*}{Proposition}
\newtheorem{conjecture}[thm]{Conjecture}
\newtheorem*{conjecture*}{Conjecture}
\newtheorem*{fact*}{Conjecture}
\newtheorem*{criterion*}{Criterion}
\newtheorem*{algorithm*}{Algorithm}
\newtheorem*{ax*}{Axiom}
\newtheorem*{assumption*}{Assumption}
\newtheorem{question}[thm]{Question}
\newtheorem*{question*}{Question}
\theoremstyle{remark}
\newtheorem{rem}[thm]{Remark}
\newtheorem*{rem*}{Remark}
\newtheorem*{rems*}{Remarks}
\newtheorem*{claim*}{Claim}
\newtheorem*{exercise*}{Exercise}
\newtheorem*{note*}{Note}
\newtheorem{notation}[thm]{Notation}
\newtheorem*{notation*}{Notation}
\newtheorem*{summary*}{Summary}
\newtheorem*{acknowledgement*}{Acknowledgement}
\newtheorem*{conclusion*}{Conclusion}
\theoremstyle{definition}
\newtheorem{defn}[thm]{Definition}
\newtheorem*{defn*}{Definition}
\newtheorem*{example*}{Example}
\newtheorem*{examples*}{Examples}
\newtheorem*{problem*}{Problem}
\newtheorem*{xca*}{Exercise}
\newtheorem*{xcas*}{Exercises}
\newtheorem*{condition*}{Condition}
\def\rest{\hspace{-0.6ex}\upharpoonright\hspace{-0.6ex}}
\def\restr{\hspace{-0.6ex}\upharpoonright}
\def\restri{\upharpoonright}
\renewcommand{\bar}{\overline}
\author[Jones]{Gareth Jones}
\address{School of Mathematics,
The University of Manchester,
Oxford Rd,
Manchester,
M13 9PL,
UK}
\email{Gareth.Jones-3@manchester.ac.uk}
\urladdr{http://personalpages.manchester.ac.uk/staff/Gareth.Jones-3/index.php}
\author[Kirby]{Jonathan Kirby}
\address{School of Mathematics, University of East Anglia, Norwich Research Park, Norwich, NR4 7TJ, UK}
\email{jonathan.kirby@uea.ac.uk}
\urladdr{http://www.uea.ac.uk/mathematics/people/profile/jonathan-kirby}
\author[Le Gal]{Olivier Le Gal}
\address{Univ. Grenoble Alpes, Univ. Savoie Mont Blanc, CNRS,
LAMA, 73000 Chamb\'ery, France}
\email{Olivier.Le-Gal@univ-smb.fr}
\urladdr{http://www.lama.univ-savoie.fr/~LeGal/}
\author[Servi]{Tamara Servi}
\address{Institut de Math\'ematiques de Jussieu - Paris Rive Gauche,
Universit\'e Paris Diderot (Paris 7),
UFR de Math\'ematiques - B\^atiment Sophie Germain,
Case 7012,
F-75205 Paris Cedex 13,
France}
\email{tamara.servi@math.univ-paris-diderot.fr}
\urladdr{http://www.logique.jussieu.fr/~servi/index.html}
\subjclass[2000]{primary: 03C64; secondary: 14P10}
\keywords{o-minimal structures; first-order definability;}
\begin{document}

\title{On local definability of holomorphic functions}

\begin{abstract}
	Given a collection $\mathcal{A}$ of holomorphic functions, we consider how to describe all the holomorphic functions locally definable from $\mathcal{A}$. The notion of local definability of holomorphic functions was introduced by Wilkie, who gave a complete description of all functions locally definable from $\mathcal{A}$ in the neighbourhood of a generic point. We prove that this description is no longer complete in the neighbourhood of non-generic points. More precisely, we produce three examples of holomorphic functions which suggest that at least three new operations need to be added to Wilkie's description in order to capture local definability in its entirety. The constructions illustrate the interaction between resolution of singularities and definability in the o-minimal setting.\end{abstract}

\maketitle
\begin{center}
\today
\par\end{center}

\begin{acknowledgement*}
The first author was partially suppoerted by the EPSRC (EP/J01933X/1 and EP/N007956/1).
The second author was partially supported by the EPSRC (EP/L006375/1).
The third author is partially supported by STAAVF (ANR-11-BS01-0009).
The fourth author is partially supported by ValCoMo (ANR-13-BS01-0006).
\end{acknowledgement*}

\section{Introduction\label{sec:Introduction}}

There has long been interaction between the theory of first-order definability and functional transcendence. For instance \cite{bianconi:sinus_exponential,jks,Sfouli,legal_rolin:Ck_fourier,lg:generic,serge} use various results on algebraic independence of certain functions to establish results on definability and nondefinability in o-minimal expansions of the real field. The nondefinability results give in turn natural strengthenings of statements from functional transcendence. For instance, van den Dries, Macintyre and Marker, showed, among many other things, that $\int \exp(-x^2)\,dx$ is not definable in $\mathbb R_{\text{an, exp}}$ (Theorem 5.11 in \cite{vdd_mac_mar}). This gives a much stronger form of Liouville's result that this antiderivative is not an elementary function (\cite{liouville}). 

In this spirit, we investigate the holomorphic functions which are locally definable from a given collection $\mathcal{A}$ of holomorphic functions. Roughly speaking, these are the functions whose graph (seen as a subset of $\mathbb{R}^{2N}$, for a suitable $N$) can be described, locally, by a first order formula involving finite sums, products and compositions of (the real and imaginary parts of) functions in $\mathcal{A}$ (see Definition \ref{def:locally def}). It is natural, both from the logical point of view and also from the perspective of analytic geometry, to seek a complex analytic characterization of the holomorphic functions locally definable from $\mathcal{A} $.

Wilkie gave such a characterization around \emph{generic} points in \cite{wil:conj}. He showed that if $f$ is a locally definable holomorphic function then around a generic point $f$ is contained in the smallest collection of holomorphic functions containing both the functions in $\mathcal{A}$ and all polynomials, and closed under partial differentiation, Schwarz reflection, composition and extraction of implicit functions. Wilkie conjectured that the same result would hold even without the genericity hypothesis. {The aim of this paper is to} show that in fact this description is no longer complete around non-generic points. We give three examples of functions which between them suggest that at least three further operations need to be added to those considered by Wilkie in order to capture local definability. We construct functions $f_1,f_2,f_3$ and $g_1,g_2,g_3$ with the germ of $f_i$ definable from $g_i$, but not obtainable from $g_i$ by the operations above. To obtain $f_i$ from $g_i$ requires further operations: $f_1$ needs monomial division, $f_2$ needs deramification and $f_3$ needs blowing-down (Theorems A, B and C, respectively, in Section \ref{sec:Notations,-operators-and}).

These three new operations arise naturally in resolution of singularities \cite{bm_semi_subanalytic}, which is an essential tool from analytic geometry used in o-minimality (see for example \cite{rsw,rol:ser:eq}). This is consistent with Wilkie's observation that his conjecture is related to resolution of singularities. It is natural to ask whether the operations introduced so far would suffice to give a complete description of the holomorphic functions locally definable from $\mathcal{A}$ (see Question \ref{conj+}). Indeed exactly these operations appear in the piecewise description of definable functions given in \cite{rol:ser:eq}. And based on the latter, it will be shown in the forthcoming \cite{lgsvb} that, if $\mathcal{A}$ is a collection of \emph{real} analytic functions, then the real analytic functions which are locally definable from $\mathcal{A}$ 
can be obtained from $\mathcal{A}$ and polynomials by the same list of operations (except Schwarz reflection). 
Since in the description given in \cite{rol:ser:eq},
the operations involved do not respect 
{any} underlying complex structure
(for example, exceptional divisors of blow-downs generally have real
co-dimension $1$), we cannot directly deduce an answer to Question
\ref{conj+} from an answer to its real counterpart.


\medskip{}

We now briefly discuss the proofs of Theorems A, B and C.

For Theorem A, we use Ax's functional version of Schanuel's conjecture to prove that the function $(e^z-1)/z$ cannot be obtained from $e^z$ without monomial division. There is a connection here with previous work on nondefinability, for example \cite{bianconi:sinus_exponential,jks}. However, these papers concerned arbitrary first-order definability, reduced to existential definability via model completeness. Here, we consider a restricted form of definability characterized by analytic rather than logical considerations. 

{The proofs of Theorems B and C are less explicit. For Theorem B, we first observe that deramification is independent from the operations used by Wilkie and monomial division (see Proposition \ref{shift:prop}). It is however not easy to witness this independence with a natural function. So instead we build on some of the ideas contained in \cite{lg:generic}. In particular, we adapt the notion of strongly transcendental function considered there to the setting of several complex variables. A related notion of strongly transcendental function was also independently introduced by Boris Zilber \cite{Boris} under the name of `generic functions with derivatives', in connection with finding analogues of Schanuel's conjecture for other complex functions. Our present work then proves the existence of such functions, which was not considered by Zilber.}

{For Theorem C, the independence of blow-downs from the other operations simply follows from the fact that, unlike the other operators considered here, blow-downs are not local operators: they are applied to all the germs of a function along the exceptional divisor of the corresponding blow-up. As in the case of theorem B, it is not clear how to find an explicit function which witnesses this independence, and we again make use of strongly transcendental functions.}

\medskip{}
The paper is organized as follows. In Section \ref{sec:Notations,-operators-and}
we introduce some operators and algebras which allow us to give the
precise formulations of our results. In Section \ref{sec:Strong-Transcendence}
we define a notion of strong transcendence for holomorphic functions
in $\mathbb{C}^{2}$, and prove that the notion is not void and implies
an independence property which is the key point for constructing our
last two examples. Finally, in Sections \ref{sec:Monomial-division},
\ref{sec:Composition-with-roots} and \ref{sec:Blow-downs} we prove
Theorems A, B and C, respectively.


\section{Main definitions and results%
\label{sec:Notations,-operators-and}}

{In this section we introduce some notation, and the various operators and functions we consider. We then give precise statements of our results.}

Throughout this paper, we will use the word \emph{definable} in the
sense of first-order logic. Unless otherwise specified, sets and functions
definable in a given first-order structure will be understood to be
definable with parameters from the domain of the structure. We will
use the term \emph{$\emptyset$-definable} to denote sets and functions
definable without parameters.


Following Wilkie in \cite{wil:conj}, we say that the restriction $g\restriction\Delta$ of a holomorphic map $g:U\rightarrow\mathbb{C}$ defined on an open set $U\subseteq\mathbb{C}^{n}$ is a \textbf{proper restriction }if $\Delta\subset U$ is an open box, relatively compact in $U$. 
Given a family $\mathcal{A}$ of holomorphic functions defined on open subsets of $\mathbb{C}^{n}$ (for various $n\in\mathbb{N}$),
	we denote by $\mathcal{A}\restr$ the collection of all proper restrictions
	to boxes with corners in the Gaussian rationals, $\mathbb Q(i)$, of all functions in $\mathcal{A}$. We let $\mathbb{R}_{\mathcal{A}\restri}=\langle\mathbb{R}; 0,1,+,\cdot,<,\mathcal{A}\restr\,\rangle$ be the expansion of the real ordered field by the functions in $\mathcal{A}\restr$, seen as functions from some (even) power of $\mathbb R$ to $\mathbb R^2$, by identifying $\mathbb C$ with $\mathbb R^2$. 
	
	\begin{defn}\label{def:locally def}
A function $g:U\subset\mathbb C^n\rightarrow\mathbb{C}$ is \textbf{locally definable} in $\mathbb{R}_{\mathcal{A}\restri}$ if all its proper restrictions, interpreted as real functions, are definable in $\mathbb{R}_{\mathcal{A}\restri}$. If all the proper restrictions of $g$ to boxes with Gaussian rational corners are $\emptyset$-definable in $\mathcal{R}$, we say that $g$ is \textbf{locally $\emptyset$-definable}.
	\end{defn}


The symbol $\mathcal{A}$ will always denote
a collection of holomorphic functions defined on open subsets of $\mathbb{C}^{n}$,
for various $n\in\mathbb{N}$. More precisely, $\mathcal{A}$ is a
family $\left\{ \mathcal{A}_{n}\left(U\right):\ n\in\mathbb{N},\ U\subseteq\mathbb{C}^{n}\ \text{open}\right\} $,
where $\mathcal{A}_{n}(U)$ is a collection
of holomorphic functions defined on $U$.
It is convenient to suppose 
that $\mathcal{A}$
is closed under gluing and restrictions, which can always be done,
by enriching $\mathcal{A}$. More precisely, we suppose that,
if $(U_{\lambda})_{\lambda\in\Lambda}$ is a collection of open subsets
of $\mathbb{C}^{n}$, then
\[
f\in\mathcal{A}_{n}\left(\bigcup_{\lambda\in\Lambda}U_{\lambda}\right)\Leftrightarrow\forall\lambda\in\Lambda,f\rest U_{\lambda}\in\mathcal{A}_{n}(U_{\lambda}).
\]
Thus $\mathcal{A}_{n}$ can be viewed as a sheaf, for each fixed dimension
$n$, whose stalk at $z\in\mathbb C^n$, denoted by $\mathcal{A}_{z}$, is the collection of all germs at $z$ of
the functions which belong to some $\mathcal{A}_{n}(U)$ with $z\in U$. Note that closing $\mathcal{A}$
under gluing and restrictions does not affect local ($\emptyset$-)definability
in $\mathbb{R}_{\mathcal{A}\restri}$. By a classical argument of local compactness, writing $f_{z}$ for the germ
of $f$ at $z$, we have $f\in\mathcal{A}(U)$ if and only if for all $z\in U$, $f_{z}\in\mathcal{A}_{z}$.

By abuse of terminology, we will say that $\mathcal{A}$ is a sheaf,
and write $\mathcal{A}\subseteq\mathcal{B}$ as a shorthand for $\forall n\in\mathbb{N},\forall U\subseteq\mathbb{C}^{n},\ \mathcal{A}_{n}(U)\subseteq\mathcal{B}_{n}(U)$.
In the same way, if $\mathcal{A}_{n}$ is the sheafification 
 of a family of functions, we will often drop the index 
 $n$. For instance, we will write $\mathcal{A}=\{\exp\}$ as a shorthand
for
\[
\forall U\subseteq\mathbb{C}^{n},\;\mathcal{A}_{n}(U)=\begin{cases}
\emptyset & \text{ if }n\neq1\\
\{\exp_{\restri U}\} & \text{ if }n=1
\end{cases}.
\]


We fix a collection $\mathcal{A}$ of holomorphic functions as in
the discussion above. We now construct new sheaves $\mathcal{B},\mathcal{C},\mathcal{D},\mathcal{E},\mathcal{F}$
by closing $\mathcal{A}$ under certain operations. The operations
involved in the definition of $\mathcal{B},\mathcal{C},\mathcal{D}$
are local, hence we define them by their action on germs. We  denote by $\mathcal{O}_{n}(U)$ the collection
of all holomorphic functions defined on $U\subseteq\mathbb{C}^{n}$
and by $\mathcal{O}_{a}$ the collection of all holomorphic germs
at $a\in\mathbb{C}^{n}$. We let $\mathcal{P}$ be the collection of all complex polynomials
	(in any number of variables) and $\mathcal{P}_{G}$ be the subcollection
	of all polynomials with Gaussian rational coefficients. 
	We also use the following notation: $z'=\left(z_{1},\ldots,z_{n-1}\right)$
and $z=\left(z',z_{n}\right)$. If $a\in\mathbb{C}^{n}$ then we write
$a=\left(a',a_{n}\right)\in\mathbb{C}^{n-1}\times\mathbb{C}$.

\begin{defn}
\label{def:operators}
We define the following operators:
\begin{enumerate}
\item \textbf{Polynomial} and \textbf{Gaussian polynomial} operators.
The polynomial operators are operators of arity $0$, defined for each $P\in\mathcal{P}$ and $a\in\mathbb{C}^{n}$ which map the empty set to the germ $P_{a}\in\mathcal{O}_{a}$. The Gaussian polynomial operators are similar, but restricted to polynomials $P\in\mathcal{P}_G$.
\item The \textbf{Schwarz reflection} operator at $a\in\mathbb{C}^{n}$
maps a germ $f\in\mathcal{O}_{a}$ to the germ $z\mapsto\overline{f(\overline{z})}\in\mathcal{O}_{\overline{a}}$.
\item \textbf{Composition} operators: if $a\in\mathbb{C}^{n}\ \text{and\ }b\in\mathbb{C}^{m}$,
then we consider the operator that maps $(f,g_{1},\dots,g_{n})\in\mathcal{O}_{a}\times(\mathcal{O}_{b})^{n}$
to the germ $f\circ(g_{1},\dots,g_{n})\in\mathcal{O}_{b}$, whenever
$(g_{1}(b),\dots,g_{n}(b))=a$.
\item \textbf{Partial derivative} operators: for $a\in\mathbb{C}^{n}$ and
$j\in\{1,\dots,n\}$, the $j$-th partial derivative is the operator
that maps $f\in\mathcal{O}_{a}$ to the germ $\frac{\partial f}{\partial z_{j}}\in\mathcal{O}_{a}$.
\item The \textbf{implicit function} operator at $a\in\mathbb{C}^{n}$ maps
a germ $f\in\mathcal{O}_{a}$ satisfying $f(a)=0$ and $\frac{\partial f}{\partial z_{n}}(a)\neq0$
to the (unique) germ $\varphi\in\mathcal{O}_{a'}$ satisfying $f(z',\varphi(z'))\equiv0$.
\item \textbf{Monomial division} operators: if $a\in\mathbb{C}^{n}$, then
we consider the operator that maps $f\in\mathcal{O}_{a}$ to the germ
at $a$ of the extension by continuity of $z\mapsto\frac{f\left(z\right)}{(z_{n}-a_{n})}$,
whenever the germ $z'\mapsto f\left(z',a_{n}\right)\in\mathcal{O}_{a'}$
is identically zero.
\item \textbf{Deramification}%
: if $a\in\mathbb{C}^{n}$ and $m\in\mathbb{N}^{\times}$, then we
consider the operator which maps $f\in\mathcal{O}_{a}$ to the germ
$z\mapsto f(z',a_{n}+\sqrt[m]{z_{n}-a_{n}})\in\mathcal{O}_{a}$, whenever
the germ $f$ satisfies
\[
f\left(z\right)=f\left(z',a_{n}+e^{\frac{2\text{i}\pi}{m}}\left(z_{n}-a_{n}\right)\right).
\]
Here the condition on $f$ implies that this expression does
not depend on the choice of an $m^{\mathrm{th}}$-root for $z_{n}-a_{n}$.
\end{enumerate}
\end{defn}
Notice that elementary operators can be composed as long as their
arities match.
\begin{defn}
\label{def:sets of operators}We define $\mathcal{B}^{*}$
 to be the set of all operators which can be expressed
as finite compositions of polynomial,
Schwarz reflection, composition, partial derivative and implicit function
operators. The collection $\mathcal{B}^{\emptyset*}$ is defined similarly, with Gaussian polynomial operators in place of polynomial operators. Analogously, we let $\mathcal{C}^{*}$  and $\mathcal{C}^{\emptyset*}$ be the sets of all operators which can be expressed as
finite compositions of monomial division operators and operators in $\mathcal{B}^{*}$ and $\mathcal{B}^{\emptyset*}$,
respectively. And we let $\mathcal{D}^{*}$ and 
$\mathcal{D}^{\emptyset*}$ be the sets of all operators
which can be expressed as finite compositions of  deramification
operators and operators in $\mathcal{C}^{*}$
$\mathcal{C}^{\emptyset*}$, respectively.
\end{defn}
Once we have defined these operators, we can construct the set of
all functions which are “locally obtained from $\mathcal{A}$” by
the action of such operators. For example, $\mathcal{B}$ will denote
the set of all functions which are locally obtained from $\mathcal{A}\cup\mathcal{P}$
by composition, Schwarz reflection, taking partial derivatives and
extracting implicit functions.

More formally:
\begin{defn}
\label{def: algebras}
We let $\mathcal{B}, \mathcal{B}^{\emptyset}, \mathcal{C},\mathcal{C}^{\emptyset},\mathcal{D}$ and $\mathcal{D}^{\emptyset} $ be the smallest sheaves containing $\mathcal{A}$ with stalks stable under the action of the operators in $\mathcal{B}^*, \mathcal{B}^{\emptyset*}, \mathcal{C}^*,\mathcal{C}^{\emptyset*},\mathcal{D}^*$ and $\mathcal{D}^{\emptyset*} $, respectively.
\end{defn}
\begin{rem}
Note that if $U$ is an open neighbourhood of $0\in\mathbb{C}^{n}$
and $f\in\mathcal{A}\left(U\right)$ satisfies $f\left(x',0\right)=0$,
then for every $a\in U$ such that $a_{n}\not=$0, the germ $g_{a}$
at $a$ of $\frac{f\left(x',x_{n}\right)}{x_{n}}$ belongs to $\mathcal{B}_{a}$
(since $g_{a}$ is the implicit function of $x_{n+1}x_{n}-f\left(x',x_{n}\right)=0$
at $a$). A similar argument holds for functions obtained by taking
an $m^{\mathrm{th}}$-root. 
Hence, $\mathcal{C}$ and $\mathcal{D}$ are indeed the sheaves of functions locally obtained by taking implicit functions and by deramification. 
In particular, $\mathcal{B}^{\emptyset}$
coincides with the algebra denoted by $\tilde{\mathcal{F}}$ in \cite[Definition 1.7]{wil:conj}.
\end{rem}
It remains to define the functions locally obtained by blow-downs.
For this, we say that a sheaf $\mathcal{G}\subseteq\mathcal{O}$ is
stable under \textbf{blow-downs} if for every $n\in\mathbb{N}$, every open
subset $U\subseteq\mathbb{C}^{n}$ and every blow-up $\pi:V\to U$
of a smooth analytic manifold $X\subseteq U$, which is locally defined
by a system of equations in $\mathcal{G}$, we have that if $f\circ\pi\in\mathcal{G}(V)$
then $f\in\mathcal{G}(U)$. A more precise (but heavier) definition
could be given with no mention of $\mathcal{G}(V)$ (note that $V$
is not an open subset of a power of $\mathbb{C}$), using local coordinates,
but in this paper we will only mention the blow-up of $0\in\mathbb{C}^{2}$
and introduce the corresponding local coordinates in Section \ref{sec:Blow-downs}.
\begin{defn}
\label{def: E}Let $\mathcal{E}$  and $\mathcal{E}^{\emptyset}$
be the smallest sheaves which are stable under blow-downs and under the action of the operators in $\mathcal{D}^{*}$ and $\mathcal{D}^{\emptyset*}$,
respectively. 

Finally, we denote by $\mathcal{F}$ and $\mathcal{F}^{\emptyset}$ the sheaves of all locally definable holomorphic functions and all locally $\emptyset$-definable holomorphic functions, respectively. 
\end{defn}
Thanks to the compactness
of the fibres of blow-ups, if $f$ is the blow-down of $g$ and $g$
is locally definable in $\mathbb{R}_{\mathcal{A}\restri}$, then $f$
is also locally definable in $\mathbb{R}_{\mathcal{A}\restri}$.

\medskip 

By construction we have :
\[
\begin{array}{ccccccccccccc}
\mathcal{A} & \subseteq & \mathcal{B}^{\emptyset} & \subseteq & \mathcal{C}^{\emptyset} & \subseteq & \mathcal{D}^{\emptyset} & \subseteq & \mathcal{E}^{\emptyset} & \subseteq & \mathcal{F}^{\emptyset}\\
 &  & \mathbin{\rotatebox[origin=c]{-90}{\ensuremath{\subseteq}}} &  & \mathbin{\rotatebox[origin=c]{-90}{\ensuremath{\subseteq}}} &  & \mathbin{\rotatebox[origin=c]{-90}{\ensuremath{\subseteq}}} &  & \mathbin{\rotatebox[origin=c]{-90}{\ensuremath{\subseteq}}} &  & \mathbin{\rotatebox[origin=c]{-90}{\ensuremath{\subseteq}}}\\
 &  & \mathcal{B} & \subseteq & \mathcal{C} & \subseteq & \mathcal{D} & \subseteq & \mathcal{E} & \subseteq & \mathcal{F} & \subseteq & \mathcal{O}.
\end{array}\ \ \ (*)
\]

\medskip{}
We can now give a precise formulation of the results announced in
the introduction.

Wilkie proposed the following conjecture.
\begin{conjecture}
	[{\cite[Conjecture 1.8]{wil:conj}}]\label{conj} Let
	$\mathcal{A}$ be a family of complex holomorphic functions. Let $z\in\mathbb{C}^{n}$,
	$U$ be an open neighbourhood of $z$ and $f:U\rightarrow\mathbb{C}$
	be a holomorphic function which is locally $\emptyset$-definable
	in $\mathbb{R}_{\mathcal{A}\restri}$. Then there is an open box with
	Gaussian rational corners $\Delta\subseteq U$ such that $z\in\Delta$
	and $f\rest\Delta$ can be obtained from $\mathcal{A}\cup\mathcal{P}_{G}$
	by finitely many applications of composition, Schwarz reflection,
	taking partial derivatives and extracting implicit functions.
\end{conjecture}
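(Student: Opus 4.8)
\emph{A plan of attack (which, as Theorems A, B and C will show, cannot be completed).}
The plan is to prove Conjecture \ref{conj} by reducing the behaviour of $f$ near an arbitrary point to its behaviour near \emph{generic} points, where Wilkie's theorem from \cite{wil:conj} already gives the conclusion. Fix $z$; composing with an affine map with Gaussian rational coefficients (a composition operator) we may assume $z=0$, and shrinking $U$ we may assume it is a box with Gaussian rational corners, so that ``$f$ is locally $\emptyset$-definable'' means precisely that every proper restriction of $f$, read as a pair of real functions, is $\emptyset$-definable in $\mathbb{R}_{\mathcal{A}\restri}$.

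First I would use the o-minimal structure theory of $\mathbb{R}_{\mathcal{A}\restri}$. A $C^\infty$ cell decomposition partitions a neighbourhood of $0$ into finitely many definable cells on each of which $\mathrm{Re}\,f$ and $\mathrm{Im}\,f$ are real-analytic; the union $\Omega$ of the top-dimensional cells is a dense definable open set, and at every point of $\Omega$ the graph of $f$ sits, locally, at a generic point of a definable set in the sense of \cite{wil:conj}. Hence Wilkie's theorem yields $f_a\in\mathcal{B}^{\emptyset}_a$ for all $a\in\Omega$, and the content of the conjecture is concentrated on the definable set $Z$ of points where this may fail, a set of real dimension $<2n$ accumulating at $0$.

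Next I would bring in resolution of singularities in the style of \cite{bm_semi_subanalytic}: choose a finite sequence of blow-ups $\pi\colon M\to U$ with smooth centres contained in zero sets of functions locally definable from $\mathcal{A}$, after which $\pi^{-1}(Z)$ together with the exceptional locus becomes a normal crossings divisor and, near every point of $M$, the appropriate transform of $f$ is again locally $\emptyset$-definable and now sits over a generic point, hence lies in $\mathcal{B}^{\emptyset}$ there. Inverting the tower of blow-ups reconstructs the germ of $f$ at $0$ from this transform, but the inversion naturally introduces monomial divisions (to strip exceptional monomials), deramifications (along branches of $Z$ meeting exceptional divisors) and blow-downs. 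So this argument delivers only $f_0\in\mathcal{E}^{\emptyset}_0$, and closing the gap requires showing $\mathcal{E}^{\emptyset}=\mathcal{B}^{\emptyset}$ as sheaves.

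That last step is the main obstacle, and in the light of Theorems A, B and C it is where the strategy actually breaks down. One would want to re-express each of the three extra operations using only composition, Schwarz reflection, partial differentiation and implicit functions --- for instance a monomial division $f(z)/(z_n-a_n)$ ought to be the implicit function of $w\,(z_n-a_n)-f(z)=0$, except that at the non-generic point the partial derivative in $w$ is exactly $z_n-a_n$, which vanishes, so the implicit function operator does not apply and there is no evident substitute. Sections \ref{sec:Monomial-division}--\ref{sec:Blow-downs} show that no substitute exists: Ax's functional form of Schanuel's conjecture obstructs monomial division already for $(e^z-1)/z$, and the strongly transcendental functions built in Section \ref{sec:Strong-Transcendence} obstruct deramification and blow-downs. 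Thus the reduction to generic points can be carried out, but only into $\mathcal{E}^{\emptyset}$ rather than $\mathcal{B}^{\emptyset}$, and in fact the conjecture fails.
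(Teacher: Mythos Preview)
The statement you were asked to ``prove'' is a \emph{conjecture}, and the paper does not prove it --- on the contrary, Theorems A, B and C establish that it is false. So there is no proof in the paper to compare your proposal against.

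Your write-up is not a proof attempt either: you explicitly present it as a strategy that cannot be completed, and you correctly locate the obstruction. The resolution-of-singularities heuristic you sketch --- reduce to generic points via Wilkie's theorem, then invert a tower of blow-ups --- is exactly the motivation the paper gives in the introduction for why the extra operations (monomial division, deramification, blow-down) arise, and why the right target is $\mathcal{E}^{\emptyset}$ rather than $\mathcal{B}^{\emptyset}$. Your observation that the implicit-function trick for $f(z)/(z_n-a_n)$ fails precisely because $\partial/\partial w$ vanishes at the non-generic point is also the paper's point in the Remark following Definition~\ref{def: algebras}. So in substance you and the paper agree: the conjecture fails, and your sketch is a plausible heuristic for the modified Question~\ref{conj+} ($\mathcal{E}^{\emptyset}=\mathcal{F}^{\emptyset}$?), not for the original conjecture.

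One caution about the sketch itself, should you want to turn it into an argument for Question~\ref{conj+}: the step ``after resolution, the transform of $f$ sits over a generic point'' is not automatic. Genericity in \cite{wil:conj} is with respect to a pregeometry determined by $\mathcal{A}$, and a point on an exceptional divisor need not be generic in that sense even after the geometry has been made normal-crossings. The paper alludes to this difficulty when it notes that the piecewise description in \cite{rol:ser:eq} does not respect the complex structure, so one cannot directly transport the real result of \cite{lgsvb} to the holomorphic setting.
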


Conjecture \ref{conj} can be restated as follows: $\mathcal{B}^{\emptyset}=\mathcal{F}^{\emptyset}$.
By adding all constant functions to $\mathcal{A}$, the conjecture
implies also that $\mathcal{B}=\mathcal{F}$. The conjecture is however not true in general. We show that 
the six horizontal inclusions between $\mathcal B^{\emptyset}$ and $\mathcal E$ in diagram ($*$) are strict in general:
\begin{namedthm}
	{Theorem A}[Monomial division]\label{cex:division} Suppose $\mathcal{A}=\{\exp\}$
	is the complex exponential function. Then the function $f: z\mapsto(e^{z}-1)/z$
	(extended by continuity at $z=0$) is holomorphic and locally
	$\emptyset$-definable in $\mathbb{R}_{\mathcal{A}\restri}$, but
	no restriction of $f$ to any neighbourhood of $0$ can be obtained
	from $\mathcal{A}\cup\mathcal{P}_{G}$ by finitely many applications
	of composition, Schwarz reflection, taking partial derivatives and
	extracting implicit functions. \\ More precisely, the germ at $0$ of $f$ belongs to $\mathcal{C}_{0}^{\emptyset}\setminus\mathcal{B}_{0}$.
	In particular, $\mathcal{B}^{\emptyset}\neq\mathcal{C}^{\emptyset}$
	and $\mathcal{B}\neq\mathcal{C}$.
	 
\end{namedthm}

\begin{namedthm}
	{Theorem B}[Deramification]\label{cex:deramif}
	There exists a holomorphic function $g$, with domain a neighbourhood
	of $0\in\mathbb{C}$, such that the function $f:z\mapsto g(\sqrt{z})$
	is well defined, holomorphic in a neighbourhood of $0$ and locally
	$\emptyset$-definable in $\mathbb{R}_{\mathcal{A}\restri}$ (where
	$\mathcal{A}=\{g\}$), but no restriction of $f$ to any neighbourhood
	of $0$ can be obtained from $\mathcal{A}\cup\mathcal{P}_{G}$ by
	finitely many applications of composition, Schwarz reflection, monomial
	division, taking partial derivatives and extracting implicit functions. \\ More precisely,   the germ at $0$ of $f$
belongs to $\mathcal{D}_{0}^{\emptyset}\setminus\mathcal{C}_{0}$.
In particular, $\mathcal{C}^{\emptyset}\neq\mathcal{D}^{\emptyset}$
and $\mathcal{C}\neq\mathcal{D}$.
\end{namedthm}

\begin{namedthm}
	{Theorem C}[Blow-down]\label{cex:effondrement} Let $\pi$ be the
	blow-up of the origin in $\mathbb{C}^{2}$. Then there exists a holomorphic
	function $f$, with domain a neighbourhood of $0\in\mathbb{C}^{2}$,
	such that, if $\mathcal{A}=\{f\circ\pi\}$, then $f$ is
	locally $\emptyset$-definable in $\mathbb{R}_{\mathcal{A}\restri}$,
	but no restriction of $f$ to any neighbourhood of $0$ can be obtained
	from $\mathcal{A}\cup\mathcal{P}_{G}$ by finitely many applications
	of composition, Schwarz reflection, monomial division, deramification, taking partial derivatives and extracting implicit
	functions. \\ More precisely, the germ at $0$
of $f$ belongs to $\mathcal{E}_{0}^{\emptyset}\setminus\mathcal{D}_{0}$.
In particular, $\mathcal{D}^{\emptyset}\neq\mathcal{E}^{\emptyset}$
and $\mathcal{D}\neq\mathcal{E}$. \end{namedthm}
\begin{rem}

Wilkie proves in \cite[Theorem 1.10]{wil:conj} that Conjecture \ref{conj}
holds for all points $z$ which are \emph{generic} with respect to
a suitable pregeometry associated to ${\mathcal{A}}$.
 Since the points at which we apply the operators in $\mathcal{E}^{*}\setminus\mathcal{B}^{*}$
are not generic, the theorems above are consistent with Wilkie's result. In particular, if $z$ is generic point, then we
can restate Wilkie's result \cite[Theorem 1.10]{wil:conj} in the following form: $\mathcal{B}_{z}^{\emptyset}=\mathcal{C}_{z}^{\emptyset}=\mathcal{D}_{z}^{\emptyset}=\mathcal{E}_{z}^{\emptyset}=\mathcal{F}_{z}^{\emptyset}$.
\end{rem}

The purpose of \cite[Conjecture 1.8 and Theorem 1.10]{wil:conj} was
to describe the holomorphic functions locally definable from $\mathcal{A}$
in terms of the functions in $\mathcal{A}$, using purely complex
operations. Since our examples are obtained by means of three natural
complex operations, we could consider a modification of conjecture \ref{conj}. But we prefer to formulate this as a question.
\begin{question}
	\label{conj+} Let $\mathcal{A}$ be a family of complex holomorphic
	functions. Let $z\in\mathbb{C}^{n}$, $U$ be an open neighbourhood
	of $z$ and $f:U\rightarrow\mathbb{C}$ be a holomorphic function
	which is locally $\emptyset$-definable in $\mathbb{R}_{\mathcal{A}\restri}$.
	Is there an open box with Gaussian rational corners $\Delta\subseteq U$
	such that $z\in\Delta$ and $f\rest\Delta$ can be obtained from $\mathcal{A}\cup\mathcal{P}_{G}$
	by finitely many applications of composition, Schwarz reflection,
	monomial division, deramification, blow-downs, taking
	partial derivatives and extracting implicit functions?
\end{question}
Question \ref{conj+} can therefore be restated as follows: do $\mathcal{E}^{\emptyset}$
and $\mathcal{F}^{\emptyset}$ coincide?

\medskip{}

We conclude this section with some considerations on the local operators defined above, which will be useful to prove our main results.

The operators in $\mathcal{D}^{*}$ act on germs of holomorphic functions,
hence they can also be seen as acting on the Taylor expansion of such
germs (we do not make a distinction between an analytic germ and
its Taylor expansion). Let us fix some notation.
\begin{notation}
\label{notation: tuples}We will often use the following notation
for tuples: if $k\in\mathbb{N}\setminus\{0\}$, then $z=z^{\left(k\right)}$ means
that $z$ is a $k$-tuple of variables. Similarly, we write $a=a^{\left(k\right)}$
for a point in $\mathbb{C}^{k}$.

If $\alpha\in\mathbb{N}^{k}$ and $z=\left(z_{1},\ldots,z_{k}\right)\in\mathbb{C}^{k}$
then $z^{\alpha}=z_{1}^{\alpha_{1}}z_{2}^{\alpha_{2}}\dots z_{k}^{\alpha_{k}}$.
We set $\alpha!=\alpha_{1}!\ldots\alpha_{k}!$, $|\alpha|=\sum_{i=1}^k\alpha_{i}$
and denote by $\partial_{\alpha}f$ the partial derivative $\frac{\partial^{|\alpha|}f}{\partial z^{\alpha}}$
of a function $f:\mathbb{C}^{k}\to\mathbb{C}$. If $\alpha=0$, then
$\partial_{\alpha}f=f$. \end{notation}
\begin{defn}
\label{def: jet}If $U\subseteq\mathbb{C}^{m}$, $f\in\mathcal{O}(U)$
and $a=(a_{1}^{\left(m\right)},\dots,a_{n}^{\left(m\right)})\in U^{n}$,
the \textbf{jet of order $k$ of $f$ at $a$ }is the tuple $j_{n}^{k}f(a)$
of all partial derivatives $\partial_{\alpha}f$ for $|\alpha|\le k$,
evaluated at the points $a_{i}$, for $i=1,\dots,n$:
\[
j_{n}^{k}f(a)=(\partial_{\alpha}f(a_{1}^{\left(m\right)}),\ldots,\partial_{\alpha}f(a_{n}^{\left(m\right)}))_{|\alpha|\le k}.
\]
 We omit the subscript $n$ if $n=1$.
\end{defn}
The action of our operators on Taylor expansions can be described
as follows.
\begin{prop}
\label{formalop} Let $k,n,m_{1},\ldots,m_{k}\in\mathbb{N}$ and $a_{1}\in\mathbb{C}^{m_{1}},\ldots,a_{k}\in\mathbb{C}^{m_{k}},b\in\mathbb{C}^{n}$.
Let $\mathcal{L}:\mathcal{O}_{a_{1}}\times\dots\times\mathcal{O}_{a_{k}}\to\mathcal{O}_{b}$
be an operator in $\mathcal{D}^{*}$, and $(f_{1},\dots,f_{k})\in\mathcal{O}_{a_{1}}\times\dots\times\mathcal{O}_{a_{k}}$
be a $k$-tuple of germs in the domain of definition of $\mathcal{L}$.
Then, there exist
\begin{itemize}
\item a neighbourhood $W$ of $(f_{1},\dots,f_{k})$ for the Krull topology,
\item a tuple of constants $c=(c_{1},\dots,c_{\ell})\in\mathbb{C}^{\ell}$
(for some $\ell\in\mathbb{N}$),
\item for each $\alpha\in\mathbb{N}^{n}$, an integer $n_{\alpha}\in\mathbb{N}$
and a polynomial $P_{\alpha}\in\mathbb{Q}[y,\overline{y}]$, where
$y=y^{\left(N_{\alpha}\right)}$ and $N_{\alpha}$ can be a computed
from $k,m_{1},\ldots,m_{k},\ell,n_{\alpha}$,%
\end{itemize}

such that, for each $(g_{1},\dots,g_{k})\in W$, $\mathcal{L}(g_{1},\dots,g_{k})$
has a Taylor expansion at $b$ of the form
\[
{\displaystyle \mathcal{L}(g_{1},\dots,g_{k})(x)=\sum_{\alpha\in\mathbb{N}^{n}}P_{\alpha}(c_{1},\ldots,c_{\ell},j^{n_{\alpha}}g_{1}(a_{1}),\dots,j^{n_{\alpha}}g_{k}(a_{k}))(x-b)^{\alpha}}.
\]

\end{prop}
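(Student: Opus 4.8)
The plan is to prove Proposition~\ref{formalop} by induction on the number of elementary operators composing $\mathcal{L}$, using the fact that an operator in $\mathcal{D}^*$ is, by Definition~\ref{def:sets of operators}, a finite composition of polynomial operators, Schwarz reflections, compositions, partial derivatives, implicit function operators, monomial divisions and deramifications. For the base case we take $\mathcal{L}$ to be the identity (or a single Gaussian/complex polynomial operator of arity $0$), where the statement is trivial: the Taylor coefficients of $g_i$ at $a_i$ are literally the entries of the jets $j^{n_\alpha} g_i(a_i)$, the constants $c$ absorb the coefficients of the polynomial, and the $P_\alpha$ are coordinate projections (or, in the polynomial case, constants). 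The Krull neighbourhood $W$ in the base case is all of $\mathcal{O}_{a_1}\times\cdots\times\mathcal{O}_{a_k}$.

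For the inductive step, I would write $\mathcal{L} = \mathcal{M} \circ \mathcal{L}'$ where $\mathcal{L}'$ is a tuple of operators already known to satisfy the conclusion and $\mathcal{M}$ is a single elementary operator, and then check that each of the seven elementary operations transforms a Taylor expansion of the stated shape into another one of the stated shape. This is a routine but slightly tedious case analysis. Partial differentiation multiplies the $\alpha$-th coefficient by a combinatorial factor and shifts the index, so it preserves the form (with the same $c$, $n_\alpha$ incremented trivially, and $P_\alpha$ rescaled by an integer). Composition $f\circ(h_1,\dots,h_p)$ produces coefficients that are universal polynomials (Fa\`a di Bruno) in the coefficients of $f$ and of the $h_j$ up to order $|\alpha|$, hence polynomials with rational coefficients in the relevant jets; here one must shrink $W$ to ensure the composition is defined, which is an open condition in the Krull topology because it only constrains the value $h_j(a)$ (a finite-jet condition, namely $(h_1(a),\dots,h_p(a)) = a_f$, stays true on a Krull-open set once it holds, or rather we restrict to germs with the correct $0$-jet at that coordinate). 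Schwarz reflection conjugates coefficients and negates the appropriate variables, which is exactly what allowing $P_\alpha \in \mathbb{Q}[y,\overline{y}]$ (rather than $\mathbb{Q}[y]$) is there to handle — I would emphasise that this is the sole reason the $\overline{y}$ variables appear. The implicit function operator: if $\varphi$ solves $f(z',\varphi(z'))=0$, then the coefficients of $\varphi$ are determined recursively by the coefficients of $f$ via polynomial identities divided by powers of $\partial f/\partial z_n(a)$; since we are in the domain of $\mathcal{L}$ this derivative is a nonzero constant determined by the data, so after shrinking $W$ (to keep $\partial f/\partial z_n(a)$ near its value, again a finite-jet Krull-open condition) we may absorb its inverse into the constant tuple $c$, and the coefficients of $\varphi$ become genuine polynomials in $c$ and the jets. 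Monomial division simply reindexes (dividing by $z_n - a_n$ shifts the coefficients in the last variable down by one, using the hypothesis that the order-zero slice vanishes, which is a closed condition automatically satisfied on the relevant part of $W$), introducing no new constants. Deramification replaces $z_n - a_n$ by its $m$-th root: under the stated invariance hypothesis only the coefficients with $\alpha_n \equiv 0 \pmod m$ survive and get reindexed to $\alpha_n/m$, again with no new constants and no new polynomial complexity. In every case $N_\alpha$ can be bounded explicitly in terms of $k$, the $m_i$, $\ell$ and $n_\alpha$, because each operation increases the jet order needed by a controlled amount.

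I expect the main obstacle to be the bookkeeping around the Krull neighbourhood $W$ and the interaction of the domain-of-definition conditions with the inductive composition. The subtlety is that operators like implicit function and monomial division are only partially defined, and their defining conditions ($f(a)=0$, $\partial f/\partial z_n(a)\neq 0$, the vanishing of a slice) must be shown to hold not just for $(f_1,\dots,f_k)$ but for all tuples in a suitable Krull-neighbourhood $W$, or else to be subsumed into the formal identity. The point to make carefully is that the equalities among finitely many Taylor coefficients either hold identically as a consequence of the form of the expansion produced by $\mathcal{L}'$ (so they propagate automatically), or they are open non-vanishing conditions on a finite jet, which cut out a Krull-open set; intersecting finitely many such sets down the composition chain keeps $W$ Krull-open. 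Once this is set up, the polynomial identities themselves are the classical ones (Fa\`a di Bruno, implicit-function recursion, Cauchy products) and require no real work beyond citing them and tracking that the coefficients stay in $\mathbb{Q}[y,\overline{y}]$. I would therefore organise the proof as: (1) state the induction, (2) dispatch the base case, (3) for each of the seven operators give the coefficient transformation formula and the (possibly empty) shrinking of $W$, (4) note that rationality of coefficients and the presence of $\overline{y}$ only for Schwarz reflection are preserved throughout, and (5) remark that $N_\alpha$ is computable by following the recursion.
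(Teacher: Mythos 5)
Your proposal follows essentially the same route as the paper's: treat each elementary operator case by case (Fa\`a di Bruno for composition, a recursive identity for implicit functions, reindexing for monomial division and deramification, conjugation for Schwarz reflection, with the composition/implicit-function domain constraints handled via the Krull topology), and then observe that the stated form of the Taylor expansion is preserved under composing operators. The one place to tighten is the implicit-function case: the paper takes $W$ to consist of germs $g$ with $\partial g/\partial z_n(a)$ \emph{equal} to $\partial f/\partial z_n(a)$, not merely near it, since only exact equality lets the inverse be absorbed into the fixed constant $c_1$ and makes $P_\alpha$ a genuine polynomial rather than a rational function; this is still a Krull neighbourhood because it contains all $g$ agreeing with $f$ to order two at $a$.
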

\begin{proof}
We first prove that the proposition is true if $\mathcal{L}$ is an
elementary operator.

For all but the implicit function operator, the neighbourhood $W$
will be the whole space. Note that for all but the polynomial and
the composition operators, we have $k=1$ (hence in these cases we
will write $g$ instead of $g_{1}$).
\begin{enumerate}
\item If $\mathcal{L}$ is a polynomial operator, then $n_{\alpha}=0$ and
the constants $c_{i}$ can be computed from the coefficients of the
polynomial and the coordinates of $b$.
\item If $\mathcal{L}$ is the Schwarz reflection operator, then $n_{\alpha}=|\alpha|$
and $P_{\alpha}(j^{|\alpha|}g(a))=\overline{(\alpha!)^{-1}\partial_{\alpha}g(a)}$.
\item If $\mathcal{L}$ is a composition operator, then the proposition
follows from the so called Faà Di Bruno formula (see for example \cite[p. 92. §115]{arbogast:FaaDiBruno}),
with $n_{\alpha}=|\alpha|$.
\item If $\mathcal{L}$ is the partial derivative operator $\frac{\partial}{\partial z_{j}}$,
then $n_{\alpha}=|\alpha|+1$ and $P_{\alpha}(j^{n_{\alpha}}g(a))=\frac{\alpha_{j}}{\alpha!}\frac{\partial_{}}{\partial z_{j}}\left(\partial_{\alpha}g(a)\right)$.
\item If $\mathcal{L}$ is the implicit function operator, then set $c_{1}=\left(\frac{\partial f}{\partial z_{n}}(a)\right)^{-1}$,
$n_{\alpha}=|\alpha|$, and
\[
W=\left\{ g:\ \; g(a)=0,\;\frac{\partial g}{\partial z_{n}}(a)=\frac{\partial f}{\partial z_{n}}(a)\right\} .
\]
It is easy to see that the coefficients of the Taylor expansion up
to order $|\alpha|$ of $\mathcal{L}\left(g\right)$ at the point
$b=\left(a_{1},\ldots,a_{n-1}\right)$ can be expressed as polynomials
(with coefficients in $\mathbb{Z}\left[c_{1}\right]$) in the derivatives
up to order $\left|\alpha\right|$ of $g$ at the point $a$.
Hence, $n_{\alpha}=\left|\alpha\right|$ and $P_{\alpha}$ is a polynomial
in the jet of order $|\alpha|$ of $g$ and $c_{1}$. \\
In order for $P_{\alpha}$ to be a polynomial and not a rational function,
we need $\frac{\partial g}{\partial z_{n}}(a)$ to be constant and
hence to restrict our claim to the neighbourhood $W$.
\item If $\mathcal{L}$ is a monomial division operator, then $n_{\alpha}=|\alpha|+1$
and 
\[P_{\alpha}(j^{|\alpha|}g(a))= \left(\left(\alpha_{n}+1\right)\cdot\alpha!\right)^{-1} \frac{\partial}{\partial z_{n}}\left(\partial_{\alpha}g\left(a\right)\right).\]
\item If $\mathcal{L}$ is the $m$th-deramification operator,
then $n_{\alpha}=m|\alpha|$; we leave it to the reader to find the
expression for $P_{\alpha}$ in this case.
\end{enumerate}

Each operator in $\mathcal{D}^{*}$ is a composition of elementary
operators. Suppose that the proposition holds for an operator $\mathcal{L}$.
If $f\in\mathcal{O}_{b}$ is in the image of $\mathcal{L}$ and $V\subseteq\mathcal{O}_{b}$
is an open neighbourhood of $f$ in the Krull topology, then $\mathcal{L}^{-1}\left(V\right)$
is an open neighbourhood of $\mathcal{L}^{-1}\left(f\right)$. This
observation and an easy computation show that the conclusion of the proposition
is preserved under composition. We illustrate the proof for the composition
of two operators $\mathcal{L}:\mathcal{O}_{a}\rightarrow\mathcal{O}_{b}$
and $\mathcal{M}:\mathcal{O}_{b}\rightarrow\mathcal{O}_{d}$, the
general case following as a straightforward but tedious exercise.
Suppose that
\[
\begin{aligned}\mathcal{L}\left(f\right) & \left(y\right)= & \sum_{\alpha}P_{\alpha}\left(c_{\mathcal{L}},j^{n_{\alpha}}f\left(a\right)\right)\left(y-b\right)^{\alpha},\\
\mathcal{M}\left(g\right) & \left(z\right)= & \sum_{\beta}Q_{\beta}\left(c_{\mathcal{M}},j^{m_{\beta}}g\left(b\right)\right)\left(z-d\right)^{\beta}.
\end{aligned}
\]
Then $j^{m_{\beta}}\mathcal{L}\left(f\right)\left(b\right)=\left(\partial_{\gamma}\left(\sum_{\alpha}P_{\alpha}\left(c_{\mathcal{L}},j^{n_{\alpha}}f\left(a\right)\right)\left(y-b\right)^{\alpha}\right)\right)_{\left|\gamma\right|\leq m_{\beta}}=\left(\gamma!P_{\gamma}\left(c_{\mathcal{L}},j^{n_{\gamma}}f\left(a\right)\right)\right)_{\left|\gamma\right|\leq m_{\beta}}$.
Hence we can write
\[
\mathcal{M}\left(\mathcal{L}\left(f\right)\right)\left(z\right)=\sum_{\beta}\tilde{Q}_{\beta}\left(c_{\mathcal{L}},c_{\mathcal{M}},j^{m_{\beta}}f\left(a\right)\right)\left(z-d\right)^{\beta},
\]
for some suitable polynomials $\tilde{Q}_{\beta}$.


\end{proof}

\section{Strong Transcendence\label{sec:Strong-Transcendence}}

To prove Theorems B and C, we will need to work with holomorphic
functions which satisfy very few {functional} relations. For this reason, we extend
to the complex and multi-dimensional setting the notion of \emph{strong
transcendence}, which was introduced in \cite{lg:generic} in a real
and one-dimensional context. Thanks to Proposition \ref{prop:st}
below, the germs of a strongly transcendental holomorphic function
satisfy very few relations. Proposition \ref{exist:st} shows that
the definition is not empty: strongly transcendental holomorphic functions
do exist.

We will use the notation introduced in \ref{notation: tuples} and \ref{def: jet}.
\begin{defn}
\label{def:st} A holomorphic function $f:U\subseteq\mathbb{C}^{m}\to\mathbb{C}$
is \textbf{strongly transcendental }if, for every $(k,n)\in\mathbb{N}^{2}$
and for every $z=(z_{1}^{\left(m\right)},\dots,z_{n}^{\left(m\right)})$
an $n$-tuple of distinct points of $U$, we have
\[
\text{trdeg}_{\mathbb{Q}}\mathbb{Q}(z,\overline{z},j_{n}^{k}f(z), \overline{j_{n}^{k}f(z)})\ge\text{length}(j_{n}^{k}f(z),\bar{j_{n}^{k}f(z)}).
\]

\end{defn}
We first show that such functions exist, then we prove an independence
result for their germs at distinct points.
\begin{prop}
\label{exist:st} Let $U$ be an open subset of $\mathbb{C}^{m}$.
Then the set
\[
ST(U)=\{f\in\mathcal{O}(U);\; f\text{ is strongly transcendental }\}
\]
is residual in $\mathcal{O}(U)$ with respect to the topology induced
by uniform convergence on the compact subsets of $U$. In particular,
by the Baire Category Theorem, $ST(U)\neq\emptyset$.
\end{prop}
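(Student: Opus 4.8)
The plan is to exhibit $ST(U)$ as a countable intersection of open dense subsets of $\mathcal{O}(U)$ and invoke Baire. Fix an exhaustion of $U$ by compacta so that $\mathcal{O}(U)$ is a Fréchet space (hence Baire). For each choice of $(k,n)\in\mathbb{N}^2$, I would like to say that the set of $f$ for which the strong transcendence inequality fails at \emph{some} $n$-tuple of distinct points is meagre. The subtlety is that the points $z_1,\dots,z_n$ range over a continuum, so the ``bad set'' is not obviously a countable union of closed nowhere-dense sets; this is the main obstacle, and it is the place where one must be careful.

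To get around it, I would first reformulate: for a given $f$ and a given $n$-tuple $z$ of distinct points, set $L=\mathrm{length}(j_n^k f(z),\overline{j_n^k f(z)})$ (this length depends only on $k,n,m$, not on $f$), and say the inequality \emph{fails} at $z$ if the $L$ complex numbers listed in $(z,\overline z,j_n^k f(z),\overline{j_n^k f(z)})$ satisfy some nontrivial polynomial relation over $\mathbb{Q}$ with transcendence degree dropping below $L$. Since there are only countably many polynomials $P\in\mathbb{Q}[Y_1,\dots,Y_L]$ (more precisely, over $\mathbb{Q}$ in the real and imaginary parts, or equivalently in $\mathbb{Q}(i)$), it suffices to show that for each fixed $(k,n)$ and each fixed nonzero polynomial $P$, the set
\[
E_{k,n,P}=\{f\in\mathcal{O}(U): \exists\, z=(z_1,\dots,z_n)\in U^n \text{ distinct},\ P\big(\text{coords of }(z,\overline z, j_n^k f(z),\overline{j_n^k f(z)})\big)=0\}
\]
has meagre (indeed, empty-interior) closure, or rather that its complement contains a dense open set. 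The cleanest route is: show that the \emph{complement} of $E_{k,n,P}$ contains a dense $G_\delta$, or directly that a generic $f$ avoids every $E_{k,n,P}$. For a single $P$, the condition ``$P(\dots)=0$ for some distinct $z$'' defines an analytic subvariety of $U^n\times\mathbb{C}^{\text{(jet space)}}$; the key point is that for a sufficiently generic $f$ the jet map $z\mapsto (z,\overline z,j_n^k f(z),\overline{j_n^k f(z)})$ has image not contained in this proper subvariety. Because $P\ne 0$, the zero set $\{P=0\}$ is a proper real-analytic subset, so it is enough to produce, for a dense set of $f$, one tuple of distinct points escaping it — and then upgrade ``dense'' to ``dense open'' by noting that the escape condition is open in $f$ (it only involves finitely many Taylor coefficients of $f$ at the $z_i$). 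Density is achieved by a perturbation argument: given any $f_0$ and any compact $K$, perturb $f_0$ by adding a high-degree polynomial (or a bump-like holomorphic correction) supported near a chosen generic $n$-tuple to force the finitely many relevant jet-coordinates into general position relative to $P$; this uses that we may prescribe finitely many Taylor coefficients of a holomorphic function at finitely many points freely.

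Thus the structure is: (i) $\mathcal{O}(U)$ is Baire; (ii) $ST(U)=\bigcap_{k,n}\bigcap_{P\ne 0}\big(\mathcal{O}(U)\setminus E_{k,n,P}\big)$, a countable intersection; (iii) each $\mathcal{O}(U)\setminus E_{k,n,P}$ contains a dense open set, proved by the perturbation-plus-openness argument above. I expect step (iii), and specifically the density half, to be the technical heart: one must choose the $n$-tuple of distinct points and the holomorphic perturbation carefully so that the relevant jet evaluated there violates the fixed polynomial relation, while keeping the perturbation small on the given compactum. The openness half is routine since $E_{k,n,P}$ is a ``$\exists z$'' statement about finitely many continuous (indeed holomorphic-in-coefficients) functionals of $f$. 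Concluding, $ST(U)$ is residual, hence nonempty by the Baire Category Theorem.
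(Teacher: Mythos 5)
There is a fundamental gap in how you decompose $ST(U)$. Strong transcendence at $(k,n,z)$ is the statement that the tuple $(z,\overline z,j_n^k f(z),\overline{j_n^k f(z)})$, which has $2mn+2N$ complex entries with $N=n\binom{m+k}{k}$, has transcendence degree over $\mathbb{Q}$ at least $2N$; equivalently, that this tuple does not satisfy any \emph{nonsingular system} of $2mn+1$ polynomial equations over $\mathbb{Z}$. This cannot be captured by avoiding single polynomials. Taking your $E_{k,n,P}$ at face value, the intersection $\bigcap_P\bigl(\mathcal{O}(U)\setminus E_{k,n,P}\bigr)$ is far too small --- indeed empty: choose $P$ depending only on the $z$-coordinates with a zero in $U^n\setminus\Delta_n$ (for instance $P=(z_1)_1-c$ for a Gaussian-rational $c$ realised by some point of $U$, completed to a distinct $n$-tuple); then $E_{k,n,P}=\mathcal{O}(U)$ and the complement is empty, whereas $ST(U)$ is what we want to show is nonempty. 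The point is that a single polynomial relation drops transcendence degree by at most one, so witnessing a drop below $2N$ in a tuple of length $2mn+2N$ requires $2mn+1$ independent relations. This forces you to parameterise by tuples $P\in(\mathbb{Z}[y])^{2mn+1}$ together with the rank hypothesis $\mathrm{rk}\bigl(DP(z,\bar z,j_n^kf(z),\overline{j_n^kf(z)})\bigr)=2mn+1$, which is exactly the reformulation built into the paper's sets $B_{n,k,P}$, and it is not an optional refinement but what makes the countable decomposition valid.

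Even granting a corrected decomposition, the ``perturbation plus openness'' sketch does not amount to a proof, and it is not the route the paper takes. The complement of each bad set is a $G_\delta$ rather than an open set, since the quantifier over $z$ ranges over the noncompact $U^n\setminus\Delta_n$; this is fine for Baire but should not be asserted as openness. The actual work is the residuality of each $B_{n,k,P}$, for which the paper applies a genuine technical tool: the Sard--Smale theorem for Fr\'echet manifolds. Concretely, $B_{n,k,P}$ is realised as the complement of the image of the projection $\pi$ from an auxiliary Fr\'echet manifold $V$ (the incidence variety cut out by $P=0$ plus the rank condition) to $\mathcal{O}(U)$; one verifies that $\pi$ is a $C^1$-Lipschitz-Fredholm map of index $\le -1$ (the rank hypothesis is used here to count independent equations), hence nowhere a submersion, so its regular values are precisely the complement of its image, and Sard--Smale yields residuality. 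Your perturbation argument, as written, neither engages with the rank hypothesis nor explains how one controls all of $U^n\setminus\Delta_n$ simultaneously while keeping the perturbation small on compacta, so it does not replace this step.
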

Note that strong transcendence is preserved under restriction but
not under gluing, so $ST$ is a presheaf but not a sheaf.
\begin{notation}
If $F:U\to V$ is a differentiable map between $C^{1}$ manifolds,
the differential of $F$ at $u\in U$ is denoted by $DF(u)$; it belongs
to the space of linear maps $\text{L}(T_{u}U,T_{F(u)}V)$, where $T_{u}U$
denotes the tangent space of $U$ at $u$.\end{notation}
\begin{proof}
The proof goes as follows. First we express $ST(U)$ as a countable
intersection of subsets $B_{n,k,P}$ of $\mathcal{O}(U)$. Each $B_{n,k,P}$
is the complement of the image of a certain map $\pi$. We apply a
version of the Sard-Smale Theorem for Fréchet manifolds, due to Eftekharinasab
in \cite{eft:sard}, to this $\pi$, and obtain that the set of regular
values of $\pi$ is residual. Moreover, we observe that $\pi$ is everywhere critical, so the set of its
regular values is the complement of its image, then coincides
with $B_{n,k,P}$. Hence, $ST(U)$ is a countable intersection of
residual sets, therefore also a residual set.

Let us recall that a point $(p,q)\in(\mathbb{C}^{2mn}\times\mathbb{C}^{2N})$
has transcendence degree over $\mathbb{Q}$ at least $2N$ if and only
if $(p,q)$ does not satisfy any nonsingular system of $2mn+1$ polynomial
equations over $\mathbb{Z}$. Hence, given an open subset $U$ of
$\mathbb{C}^{m}$, a function $f\in\mathcal{O}(U)$ belongs to $ST(U)$
if and only if

\[
\begin{array}{c}
{\displaystyle \forall n\in\mathbb{N}^{*},\;\forall k\in\mathbb{N},\;\forall P\in(\mathbb{Z}[y^{\left(2mn+2N\right)}])^{2mn+1},}\\
\forall z=(z_{1}^{\left(m\right)},\dots,z_{n}^{\left(m\right)})\in(\mathbb{C}^{m})^{n}\setminus\Delta_{n},\\
\mathrm{if}\ {\displaystyle \text{rk}(DP(z,\bar{z},j_{n}^{k}f(z),\bar{j_{n}^{k}f(z)}))=2mn+1}\\
\mathrm{then}\ P(z,\bar{z},j_{n}^{k}f(z),\bar{j_{n}^{k}f(z)})\neq0,
\end{array}
\]
where $N=n\left(\begin{array}{c}
m+k\\
k
\end{array}\right)$ is the cardinality of the tuple $j_{n}^{k}f(z)$ and $\Delta_{n}=\{(z_{1}^{\left(m\right)},\dots,z_{n}^{\left(m\right)})\in(\mathbb{C}^{m})^{n}:\;\exists i\neq j,\; z_{i}^{\left(m\right)}=z_{j}^{\left(m\right)}\}.$

In other words, $ST(U)$ can be described as the following countable
intersection

\[
ST(U)={\displaystyle \bigcap_{\begin{array}{c}
n\in\mathbb{N}^{*},\; k\in\mathbb{N},\\
P\in\mathbb{Z}[y^{\left(2mn+2N\right)}]
\end{array}}B_{n,k,P},}
\]

where
\[
\begin{array}{rl}
B_{n,k,P} & =\{f\in\mathcal{O}(U):\ \forall z=(z_{1}^{\left(m\right)},\dots,z_{n}^{\left(m\right)})\in(\mathbb{C}^{m})^{n}\setminus\Delta_{n}\\
 & \hspace{1cm}\text{rk}(DP(z,\bar{z},j_{n}^{k}f(z),\bar{j_{n}^{k}f(z)}))=2mn+1\Rightarrow P(z,\bar{z},j_{n}^{k}f(z),\bar{j_{n}^{k}f(z)})\neq0\}.
\end{array}
\]

We fix $n,k,P$ as above, define the set $V$ :
\[
\begin{array}{rl}
V:=\{(z,f)\in((\mathbb{C}^{m})^{n}\setminus\Delta_{n})\times\mathcal{O}(U): & \; P(z,\bar{z},j_{n}^{k}f(z),\bar{j_{n}^{k}f(z)})=0,\;\\
 & \text{rk}(DP(z,\bar{z},j_{n}^{k}f(z),\bar{j_{n}^{k}f(z)}))=2mn+1\},
\end{array}
\]
and equip $V$ with the topology induced by the family of seminorms,
indexed by the compact subsets $K$ of $U$:
\[
||\cdot||_{K}:\left(z,f\right)\mapsto\max\left\{ ||z||,\sup_{x\in K}|f(x)|\right\} .
\]
We can see $B_{n,k,P}$ as $\mathcal{O}(U)\setminus\pi\left(V\right)$,
where $\pi:V\rightarrow\mathcal{O}\left(U\right)$ is the restriction
to $V$ of the projection $\left(\mathbb{C}^{m}\right)^{n}\times\mathcal{O}\left(U\right)\to\mathcal{O}(U)$
onto the second factor.

Since the equations which define $V$ involve conjugations, $V$ does not inherit a complex structure. However, 
$V$ is a real Frechet manifold, whose tangent space $T_{\left(z,f\right)}V$ at $(z,f)\in V$
is the real vector space given by
\[
\begin{array}{rl}
{\displaystyle T_{\left(z,f\right)}V} & =\{(x,g)\in(\mathbb{C}^{m})^{n}\times\mathcal{O}(U):\;\\
 & \hspace{1cm}DP\left(z,\bar{z},j_{n}^{k}f\left(z\right),\bar{j_{n}^{k}f\left(z\right)}\right)
 \cdot
 \left[x,\bar{x},
 Dj_{n}^{k}f\left(z\right)\cdot x+j_{n}^{k}g\left(z\right),\bar{Dj_{n}^{k}f\left(z\right)\cdot x+j_{n}^{k}g\left(z\right)}\right]=0\}.
\end{array}
\]

Let us show that the Sard-Smale Theorem for Fréchet manifolds applies
to $\pi$. We fix $(z,f)\in V$, and denote by $\mathcal{M}_{z}^{k}$
the subset of $\mathcal{O}(U)$ of all the functions whose jets of
order $k$ at $z$ are equal to zero:
\[
\mathcal{M}_{z}^{k}=\{g\in\mathcal{O}(U):\; j_{n}^{k}g(z)=0\}.
\]
First observe that $\pi$ is a $C^{1}$-Lipschitz-Fredholm function:
\begin{itemize}
\item $\pi$ is a $C^{1}$ function, and
\[
D\pi(z,f):(x,g)\in T_{\left(z,f\right)}V\mapsto g\in\mathcal{O}(U)
\]
 is a Lipschitz (with constant 1) operator;
\item The kernel of $D\pi(z,f)$ is included in $(\mathbb{C}^{m})^{n}\times\{0\}$,
hence it has finite dimension;
\item Since $(0,g)\in T_{\left(z,f\right)}V$ if $g\in\mathcal{M}_{z}^{k}$,
we have
\[
\mathcal{M}_{z}^{k}\subseteq\text{Im}(D\pi(z,f)),
\]
so the co-kernel of $D\pi(z,f)$ is a quotient of $\mathcal{O}(U)/\mathcal{M}_{z}^{k}$.
The space $\mathcal{O}(U)/\mathcal{M}_{z}^{k}$ has finite dimension
$2N$ over $\mathbb R$, so co-ker$(D\pi(z,f))$ has finite dimension too.
\end{itemize}

It remains to compute the index of $D\pi(z,f)$. For this, observe
that
\begin{itemize}
\item $\text{ker}\, D\pi(z,f)\subseteq(\mathbb{C}^{m})^{n}\times\{0\}$.
In particular,
\[
\text{ker}\, D\pi(z,f)\cap\left((\mathbb{C}^{m})^{n}\times\mathcal{M}_{z}^{k}\right)\subseteq(\mathbb{C}^{m})^{n}\times\{0\};
\]

\item the equations which define $T_{\left(z,f\right)}V$ only involve (in
terms of $g$) the jet $j_{n}^{k}g(z)$ of order $k$.
\end{itemize}

Let $\widetilde{T_{\left(z,f\right)}V}$ be the quotient of $T_{\left(z,f\right)}V$
by $(\{0\}\times\mathcal{M}_{z}^{k})\cap T_{\left(z,f\right)}V$ and
$\widetilde{\mathcal{O}(U)}=\mathcal{O}(U)/\mathcal{M}_{z}^{k}$.
It follows from the previous observations that $D\pi(z,f)$ factors to
\[
\widetilde{D\pi(z,f)}:\widetilde{T_{\left(z,f\right)}V}\rightarrow\widetilde{\mathcal{O}(U)}
\]
and that the index of $D\pi(z,f)$ is equal to the index of $\widetilde{D\pi(z,f)}$.
Since $\widetilde{D\pi(z,f)}$ is a linear map between the finite
dimensional spaces $\widetilde{T_{\left(z,f\right)}V}$ and $\widetilde{\mathcal{O}(U)}$,
the index of $\widetilde{D\pi(z,f)}$ is simply $\text{dim}\,\widetilde{T_{\left(z,f\right)}V}\,-\,\text{dim}\,\widetilde{\mathcal{O}(U)}$.
Now,

\[
\begin{array}{rl}
{\displaystyle \widetilde{T_{\left(z,f\right)}V}} & =\{(x,g)\in(\mathbb{C}^{m})^{n}\times\widetilde{\mathcal{O}(U)}:\;\\
 & \hspace{1cm}DP\left(z,\bar{z},j_{n}^{k}f\left(z\right),\bar{j_{n}^{k}f\left(z\right)}\right)
 \cdot
 \left[x,\bar{x},
 Dj_{n}^{k}f\left(z\right)\cdot x+j_{n}^{k}g\left(z\right),\bar{Dj_{n}^{k}f\left(z\right)\cdot x+j_{n}^{k}g\left(z\right)}\right]=0\}.
\end{array}
\]
is a subspace of a real $\left(2mn+2N\right)$-dimensional space given by
$2(2mn+1)$ equations. Among this equations, at least $2mn+1$ are independent, since $\text{rk}\left(DP\left(z,\bar{z},j_{n}^{k}f\left(z\right),\bar{j_{n}^{k}f\left(z\right)}\right)\right)=2mn+1$.
Hence $\text{dim}(\widetilde{T_{\left(z,f\right)}})\le 2N-1$. On the
other hand, $\text{dim}\ \widetilde{\mathcal{O}(U)}=2N$, therefore
\[
\text{index}(D\pi(z,f))\le (2N-1)-2N=-1.
\]

We can now conclude. The version of the Sard-Smale Theorem in \cite[Theorem 4.3]{eft:sard}
applies to $\pi$ since $\pi$ is $C^{1}$ and $1>0=\max\left\{ 0,\text{index}\left(D\pi\left(z,f\right)\right)\right\} $.
We deduce that the set of regular values of $\pi$ is residual in
$\mathcal{O}(U)$. On the other hand, since it has negative index,
$D\pi(z,f)$ has non trivial co-kernel, so $\pi$ is nowhere a submersion.
In particular the set of its critical values coincides with its image.
Hence $B_{n,k,P}$, the complement of this image, is residual. Being
a countable intersection of residual sets, $ST(U)$ is residual, which
completes the proof.

\end{proof}
The following proposition expresses strong transcendence in terms
of the lack of relations between the germs at distinct points: except for the trivial operator, no operator in $\mathcal{D}^{*}$
vanishes at any tuple of germs of any strongly transcendental function.
\begin{prop}
\label{prop:st} Let $U$ be an open subset of $\mathbb{C}^{m}$,
$f\in ST(U)$ be a strongly transcendental holomorphic function on
$U$, $a=(a_{1},\dots,a_{k})\in(\mathbb{C}^{m})^{k}$ be a $k$-tuple
of distinct points of ${U}$, $b\in\mathbb{C}^{n}$ and $\mathcal{L}:\mathcal{O}_{a_{1}}\times\dots\times\mathcal{O}_{a_{k}}\to\mathcal{O}_{b}$
be an operator in $\mathcal{D}^{*}$. If $\mathcal{L}(f_{a_{1}},\dots,f_{a_{k}})=0$,
then there exists a neighbourhood $W$ of $(f_{a_{1}},\dots,f_{a_{k}})$
for the Krull topology such that $\mathcal{L}_{\upharpoonright W}=0$. \end{prop}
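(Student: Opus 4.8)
The plan is to combine Proposition \ref{formalop} with the defining inequality of strong transcendence. By Proposition \ref{formalop}, there is a Krull-open neighbourhood $W$ of $(f_{a_1},\dots,f_{a_k})$, a constant tuple $c=(c_1,\dots,c_\ell)$, and for each $\alpha\in\mathbb N^n$ an integer $n_\alpha$ and a polynomial $P_\alpha\in\mathbb Q[y,\bar y]$, such that for every $(g_1,\dots,g_k)\in W$ the Taylor coefficient of $(x-b)^\alpha$ in $\mathcal L(g_1,\dots,g_k)$ equals $P_\alpha(c,j^{n_\alpha}g_1(a_1),\dots,j^{n_\alpha}g_k(a_k))$. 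So it suffices to show: if $\mathcal L(f_{a_1},\dots,f_{a_k})=0$, then for every $\alpha$ the polynomial $P_\alpha(c,\cdot)$ vanishes identically on a neighbourhood of the point $(j^{n_\alpha}f(a_1),\dots,j^{n_\alpha}f(a_k))$ in the relevant jet space — equivalently, $P_\alpha(c,\cdot)$ is the zero polynomial in the jet variables, after substituting the numerical values $c$. Since a polynomial that vanishes on a nonempty open subset of an affine space vanishes identically, once we have the open vanishing we can shrink $W$ (controlling only finitely many jet coordinates at a time is not enough since there are infinitely many $\alpha$, but the $n_\alpha$ grow and $W$ is already fixed) to conclude $\mathcal L_{\restri W}=0$.

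The heart of the argument is thus to upgrade "$P_\alpha(c,j^{n_\alpha}f(a))=0$ at the single point $j^{n_\alpha}f(a)$" to "$P_\alpha(c,\cdot)$ vanishes on a Zariski-dense set of jet values", and here strong transcendence enters. Fix $\alpha$, set $n=n_\alpha$, and consider the tuple $v=(j^{n}f(a_1),\dots,j^{n}f(a_k))$ together with its conjugate $\bar v$ and with $(a,\bar a)$. Strong transcendence (Definition \ref{def:st}, applied with the $n$ there equal to our $k$, the $k$ there equal to our $n$, and $z=a$) says $\mathrm{trdeg}_{\mathbb Q}\,\mathbb Q(a,\bar a,v,\bar v)\ge \mathrm{length}(v,\bar v)$, i.e. the coordinates of $(v,\bar v)$ are algebraically independent over $\mathbb Q(a,\bar a)$. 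Now $P_\alpha(c,\cdot)$ has coefficients in $\mathbb Q$; if we only knew $c\in\mathbb Q(a,\bar a)$ we could immediately conclude that the vanishing $P_\alpha(c,v,\bar v)=0$ forces $P_\alpha(c,\cdot)=0$ as a polynomial, because $(v,\bar v)$ is a generic point of affine space over $\mathbb Q(a,\bar a)$. The point requiring care is that the constants $c$ produced by Proposition \ref{formalop} need not lie in $\mathbb Q(a,\bar a)$ — e.g. for the implicit function operator $c_1=(\partial f/\partial z_n(a))^{-1}$, which is a jet coordinate of $f$, and for composition operators $c$ can involve the base points. However, inspecting the cases in the proof of Proposition \ref{formalop}, every $c_i$ is an algebraic (indeed polynomial or rational) expression in the coordinates of the base points $a_1,\dots,a_k,b$ and in the numerical jet values $j^{N}f(a_j)$ for some fixed order $N$; absorbing those finitely many jet values into the jet tuple — i.e. enlarging $n$ to $\max(n_\alpha,N)$ — we may assume each $c_i\in\mathbb Q(a,\bar a, b,\bar b)$, and $b$ is itself a polynomial function of $a$ (the base point of the output germ is determined). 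So after this bookkeeping, $P_\alpha(c,\cdot)$ is a polynomial with coefficients in $\mathbb Q(a,\bar a)$ in the jet variables, $(v,\bar v)$ is algebraically independent over $\mathbb Q(a,\bar a)$, and $P_\alpha(c,v,\bar v)=0$ forces $P_\alpha(c,\cdot)\equiv 0$ identically.

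To finish: having shown each $P_\alpha(c,\cdot)$ is the zero polynomial in the jet variables, Proposition \ref{formalop} gives that for \emph{every} $(g_1,\dots,g_k)\in W$ and every $\alpha$, the $(x-b)^\alpha$-coefficient of $\mathcal L(g_1,\dots,g_k)$ is $P_\alpha(c,\dots)=0$; hence $\mathcal L(g_1,\dots,g_k)=0$ for all $(g_1,\dots,g_k)\in W$, i.e. $\mathcal L_{\restri W}=0$, as required.

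\textbf{Main obstacle.} The one genuinely delicate point is the reduction of the previous paragraph — making sure that the constants $c$ output by Proposition \ref{formalop} (which are definitely \emph{not} rational numbers in general, only rational in the base points and finitely many jet values of $f$) can be absorbed so that the polynomial relation we extract has coefficients in a field over which the jet tuple $(v,\bar v)$ is algebraically independent. This requires going back into the case analysis of the proof of Proposition \ref{formalop} to confirm the precise form of the $c_i$ (polynomial/rational in base points and in a bounded-order jet of $f$), and then folding those bounded-order jets into the jet tuple by replacing $n_\alpha$ with a slightly larger value. Everything else — the passage from pointwise vanishing to identical vanishing of a polynomial over a field of algebraic independence, and the final "for all $g$ in $W$" conclusion — is formal.
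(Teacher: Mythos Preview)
There is a genuine gap at the heart of your argument. You assert that strong transcendence, namely
\[
\mathrm{trdeg}_{\mathbb Q}\,\mathbb Q(a,\bar a,v,\bar v)\ \ge\ \mathrm{length}(v,\bar v),
\]
means that the coordinates of $(v,\bar v)$ are algebraically independent over $\mathbb Q(a,\bar a)$. That implication is false: the inequality only gives
\[
\mathrm{trdeg}_{\mathbb Q(a,\bar a)}(v,\bar v)\ \ge\ \mathrm{length}(v,\bar v)\ -\ \mathrm{trdeg}_{\mathbb Q}(a,\bar a),
\]
which allows up to $2mk$ algebraic relations among the jet coordinates over $\mathbb Q(a,\bar a)$. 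Consequently a single equation $P_\alpha(c,v,\bar v)=0$ does \emph{not} force $P_\alpha(c,\cdot)\equiv 0$, and your ``upgrade from pointwise vanishing to identical vanishing'' breaks down. Your attempted repair, absorbing the constants $c$ into the jet tuple, does not help (the enlarged tuple is still not generic), and it also rests on an incorrect description of the $c_i$: for the polynomial operators in $\mathcal D^{*}$ the constants $c_i$ are built from the coefficients of \emph{arbitrary} complex polynomials, and the output point $b$ is part of the operator rather than a function of $a$, so in general the $c_i$ are not rational in the base points and finitely many jets of $f$.

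The paper's proof does not try to show that each $P_\alpha(c,\cdot)$ is the zero polynomial. Instead it uses strong transcendence as a \emph{uniform} bound: for every order $n_\alpha$, the tuple $(c,\bar c,\,j_k^{n_\alpha}f(a),\,\overline{j_k^{n_\alpha}f(a)})$ can satisfy at most $2\ell+2mk$ independent algebraic relations over $\mathbb Q$. Since the infinitely many equations $P_\alpha(c,j_k^{n_\alpha}f(a))=0$ all have to live inside a system of bounded rank, one deduces that the set $\{n_\alpha:\alpha\in\mathbb N^n\}$ is bounded by some $K$. One then takes
\[
W\ =\ W'\ \cap\ \{(g_1,\dots,g_k):\ j^{K}g_i(a_i)=j^{K}f(a_i)\ \text{for all }i\},
\]
a Krull neighbourhood on which every $j^{n_\alpha}g_i(a_i)$ coincides with $j^{n_\alpha}f(a_i)$, so each coefficient $P_\alpha(c,\dots)$ retains the value $0$ and $\mathcal L\restriction W=0$. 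Note that this $W$ is strictly smaller than the $W'$ coming from Proposition~\ref{formalop}; what you were attempting (vanishing on the full $W'$) is stronger than what the proposition claims and need not be true.
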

\begin{proof}
We follow the main steps of the proof of Lemma 3.6 in \cite{lg:generic}.
Suppose that $\mathcal{L}$ satisfies the hypotheses of the proposition.
Apply Proposition \ref{formalop} to $\mathcal{L}$ and $f_{i}=f_{a_{i}}$
to obtain, for all $(g_{1},\dots,g_{k})$ in some neighbourhood $W'$
of $(f_{a_{1}},\dots,f_{a_{k}})$,
\[
{\displaystyle \mathcal{L}(g_{1},\dots,g_{k})(x)=\sum_{\alpha\in\mathbb{N}^{n}}P_{\alpha}(c_{1},\dots,c_{\ell},j^{n_{\alpha}}g_{1}(a_{1}),\dots,j^{n_{\alpha}}g_{k}(a_{k}))(x-b)^{\alpha}}.
\]
From the fact that $\mathcal{L}(f_{a_{1}},\dots,f_{a_{k}})=0$, we
deduce that
\[
\forall\alpha\in\mathbb{N}^{n},\; P_{\alpha}(c_{1},\dots,c_{\ell},j_{k}^{n_{\alpha}}f(a_{1},\dots,a_{k}))=0.
\]
Denote by $c\in\mathbb C^{\ell}$ the tuple $c=(c_{1},\dots,c_{\ell})$.
Since $f\in ST(U)$, for all $\alpha\in\mathbb{N}^{n}$ the tuple
$(a,\bar{a},j_{k}^{n_{\alpha}}f(a),\bar{j_{k}^{n_{\alpha}}f(a)})$ satisfies
at most $2mk$ algebraically independent relations; so $(c,\bar{c},j_{k}^{n_{\alpha}}f(a),\bar{j_{k}^{n_{\alpha}}f(a)})$
satisfies at most $2\ell+2mk$ independent algebraic relations. In particular,
all but finitely many of the relations $P_{\alpha}(c,j_{k}^{n_{\alpha}}f(a))=0$
are dependent (recall that the $P_{\alpha}$ are polynomials over $\mathbb Q$ in their arguments and their conjugates). This implies that $\{n_{\alpha}:\alpha \in \mathbb N^n\}$ is bounded
by some constant $K\in\mathbb{N}$.

Let $W$ be the neighbourhood of $(f_{a_{1}},\dots,f_{a_{k}})$ given
by
\[
W= W' \cap \{(g_{1},\dots,g_{k})\in\mathcal{O}_{a_{1}}\times\dots\times\mathcal{O}_{a_{k}}:\;\forall i=1,\dots,k,\; j^{K}g_{i}(a_{i})=j^{K}f(a_{i})\}
\]
Then,
for every $(g_{1},\dots,g_{k})\in W$ and every $\alpha$, we have
$j^{n_{\alpha}}g_{i}(a_{i})=j^{n_{\alpha}}f(a_{i})$. It follows that
\[
\begin{array}{rcl}
{\displaystyle \mathcal{L}(g_{1},\dots,g_{k})(x)} & = & {\displaystyle \sum_{\alpha\in\mathbb{N}^{n}}P_{\alpha}(c_{1},\dots,c_{\ell},j^{n_{\alpha}}g_{1}(a_{1}),\dots,j^{n_{\alpha}}g_{k}(a_{k}))(x-b)^{\alpha}}\\
 & = & {\displaystyle \sum_{\alpha\in\mathbb{N}^{n}}P_{\alpha}(c_{1},\dots,c_{\ell},j_{k}^{n_{\alpha}}f(a_{1},\dots,a_{k}))(x-b)^{\alpha}}\\
 & = & \mathcal{L}(f_{a_{1}},\dots,f_{a_{k}})(x)\\
 & = & 0.
\end{array}
\]
Therefore $\mathcal{L}_{\upharpoonright W}=0$, which finishes the
proof.
\end{proof}

\section{Monomial division\label{sec:Monomial-division}}

In this section we let $\mathcal{A}=\{\exp\}$, and define
\[
f(z)=\frac{e^{z}-1}{z}\text{ for }z\neq0\text{ and }f(0)=1,
\]
so that $f$ is holomorphic and locally $\emptyset$-definable in
$\mathbb{R}_{\mathcal{A}\restri}$.

We shall prove Theorem A. This amounts to proving that the germ of
$f$ at zero belongs to $\mathcal{C}^{\emptyset}\setminus\mathcal{B}$
(so in particular $\mathcal{B}^{\emptyset}\subsetneq\mathcal{C}^{\emptyset}$
and $\mathcal{B}\subsetneq\mathcal{C}$).\\
{The strategy is the following. Using the differential equation satisfied by the exponential function, it is easy to see that the germs in $\mathcal{B}^{\emptyset}$ satisfy certain nonsingular systems of exponential polynomial equations (Lemma \ref{lem:implicit}). Therefore, if $f\in\mathcal{B}^{\emptyset}$, then there is a tuple $\Psi$ of analytic germs (and $f$ is one of them) such that the germs and their exponentials satisfy a nonsingular system of polynomial equations. This gives a certain upper bound $M$ on the transcendence degree of the tuple $(\Psi,\exp(\Psi))$ over $\mathbb{C}$. On the other hand, by Ax's Theorem \cite[Corollary 2]{ax:schanuel}, the components of the tuple $(\Psi,\exp(\Psi))$ satisfy few algebraic relations, so that the transcendence degree of the tuple must be at least $M+1$. This contradicts the fact that $f\in\mathcal{B}^{\emptyset}$ and proves Theorem A.}
\begin{notation}
\label{not: calcul diff}Let $k,l\in\mathbb{N}$, $x=\left(x_{1},\ldots,x_{k}\right)$
and $x'$ be a sub-tuple of $x$. If $F\left(x\right)=\left(F_{1}\left(x\right),\ldots,F_{l}\left(x\right)\right)$
is an $l$-tuple of functions and $i\in\left\{ 1,\ldots,l\right\} $,
then we denote by $\frac{\partial F_{i}}{\partial x'}\left(x\right)$
the column vector whose entries are $\frac{\partial F_{i}}{\partial x_{j}}\left(x\right)$,
for $x_{j}$ belonging to $x'$. We denote by $\frac{\partial F}{\partial x'}\left(x\right)$
the matrix whose columns are the vectors $\frac{\partial F_{i}}{\partial x'}\left(x\right)$,
for $i\in\left\{ 1,\ldots,l\right\} $. Finally, we denote by $e^{x}$
the $k$-tuple $\left(e^{x_{1}},\ldots,e^{x_{k}}\right)$.\end{notation}
\begin{defn}
Let $n\in\mathbb{N},\ x=\left(x_{0},x_{1},\ldots,x_{n}\right)$ and
$x'=\left(x_{1},\ldots,x_{n}\right)$. Let $a\in\mathbb{C}$, $g\in\mathcal{O}_{a}$
and $\mathcal{G}$ a sheaf of functions. We say that $g$ is \textbf{$n$-implicitly
defined from $\mathcal{G}$} if there exist $F=\left(F_{1},\ldots,F_{n}\right)\in\left(\mathcal{G}_{n+1}\right)^{n}$
and $\Psi=\left(\psi_{0},\psi_{1},\ldots,\psi_{n}\right)\in\left(\mathcal{O}_{a}\right)^{n+1}$
such that $\psi_{0}\left(z\right)=z,\ \psi_{1}\left(z\right)=g\left(z\right),\ F\left(\Psi\left(z\right)\right)=0$
and the matrix $\frac{\partial F}{\partial x'}\left(\Psi\left(z\right)\right)$
is invertible. In this case, $F$ is called an \textbf{implicit system}
(with coordinates in $\mathcal{G}$) of \textbf{size} $n$ and $\Psi$
is an \textbf{implicit solution}.\end{defn}
\begin{lem}\label{lem:implicit}
\label{a'} Let $a\in\mathbb{C}$ and $g\in\mathcal{O}_{a}$. Then
$g\in\mathcal{B}_{a}$ if and only if there exists $n\in\mathbb{N}$
such that $g$ is $n$-implicitly defined from $\mathbb{C}\left[x,e^{x}\right]$. \end{lem}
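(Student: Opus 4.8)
**The plan is to prove both directions of the equivalence by induction on the structure of the operators in $\mathcal{B}^*$.**

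For the reverse implication, suppose $g$ is $n$-implicitly defined from $\mathbb{C}[x,e^x]$ via an implicit system $F\in(\mathbb{C}[x,e^x]_{n+1})^n$ and implicit solution $\Psi=(\psi_0,\dots,\psi_n)$ with $\psi_0(z)=z$, $\psi_1(z)=g(z)$ and $\frac{\partial F}{\partial x'}(\Psi(z))$ invertible. Each component $F_j(x)$ is a polynomial in $x_0,\dots,x_n$ and $e^{x_0},\dots,e^{x_n}$; since $e^{x_i}=\exp\circ\psi_i$ is obtained from $\exp\in\mathcal{A}$ by composition, and polynomials are available, the germ $F_j(\Psi(z))$ lies in $\mathcal{B}_a$ as soon as the $\psi_i$ do. One then solves the system $F(\psi_0(z),x_1,\dots,x_n)=0$ for $(x_1,\dots,x_n)$ by $n$ successive applications of the implicit function operator (triangularizing via the invertible Jacobian, or applying the vector-valued implicit function theorem, which reduces to iterated scalar extraction). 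This recovers $(\psi_1,\dots,\psi_n)$, in particular $g=\psi_1$, inside $\mathcal{B}_a$. One should be slightly careful that the implicit function operator as defined requires differentiating with respect to the \emph{last} variable and a nonvanishing partial there; a permutation of coordinates and an elementary argument that invertibility of the Jacobian lets us extract the implicit functions one at a time handles this.

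For the forward implication, I would show by induction on the length of an expression for an operator $\mathcal{L}\in\mathcal{B}^*$ that every germ in $\mathcal{B}_a$ is $n$-implicitly defined from $\mathbb{C}[x,e^x]$ for some $n$. The base case covers germs of $\exp$ (take $F$ encoding $x_1=e^{x_0}$, so $x_1-e^{x_0}=0$ with derivative $1$ in $x_1$) and germs of polynomials $P\in\mathcal{P}$ (take $x_1=P(x_0)$). The inductive step treats each of the five operators: for a partial derivative, enlarge the tuple $\Psi$ to include the relevant derivative of $g$ and add the defining polynomial relation obtained by differentiating the existing system (using that differentiating $e^{x_i}$ reintroduces $e^{x_i}$, so one stays within $\mathbb{C}[x,e^x]$ — this is the point where the differential equation $\exp'=\exp$ is used); for composition $g=h\circ(u_1,\dots,u_m)$, concatenate the implicit systems for $h$ and the $u_i$ and add relations identifying the relevant variables; for Schwarz reflection, apply complex conjugation to the whole system (noting $\overline{e^{\bar z}}=e^z$, so conjugates of exponential-polynomial relations are again exponential-polynomial relations, possibly after adjoining $\bar a$ as a point); and for the implicit function operator, the new germ is by construction a solution of an equation from the existing system, so it already appears (or can be appended) to the implicit solution. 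Throughout, the nonsingularity of the combined Jacobian is checked by block-triangular bookkeeping.

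The main obstacle is the forward direction's bookkeeping for the partial derivative and composition operators: one must verify that, after repeatedly differentiating and substituting, the enlarged system still has entries in $\mathbb{C}[x,e^x]$ (rather than, say, involving $1/x_i$ or nested exponentials of non-coordinate expressions) and that the Jacobian with respect to the auxiliary variables remains invertible. The key structural facts making this work are that $\exp$ satisfies the autonomous differential equation $\exp'=\exp$ — so no new functions beyond polynomials in the $x_i$ and $e^{x_i}$ are ever needed — and that invertibility is preserved when one adjoins a new variable together with a relation whose derivative with respect to that new variable is a unit. Once these are set up carefully, each inductive step is a finite, if tedious, matrix manipulation.
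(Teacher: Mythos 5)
Your proposal follows essentially the same route as the paper: the reverse direction by iterated scalar application of the implicit function operator (which is exactly the paper's induction on the size $n$ of the system, solving one equation at a time and using closure of $\mathcal{B}$ under composition to substitute the extracted implicit function into the remaining equations), and the forward direction by showing the class of germs $n$-implicitly defined from $\mathbb{C}[x,e^x]$ is stable under each of the five operators generating $\mathcal{B}^*$, with $\exp'=\exp$ as the key fact keeping the enlarged system inside $\mathbb{C}[x,e^x]$ for the derivative case, and block-triangular Jacobian bookkeeping for nonsingularity. The details you leave implicit (most notably, recasting the implicit solution at a lower-dimensional base point after the implicit-function step, and explicitly generalizing the notion of implicit definition to germs at points of $\mathbb{C}^k$ so that the stability argument can run) are exactly the ones the paper writes out, so the two arguments agree in substance.
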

{The proof of the lemma is routine and is postponed to the end of the section.}
We now prove Theorem A.
\begin{proof}
[Proof of Theorem A] The germ $f_{0}$ of $f$ at zero belongs to
$\mathcal{C}^{\emptyset}$ since it is obtained from the germ $\exp_{0}\in\mathcal{A}_{0}$
by monomial division.

Suppose for a contradiction that $f_{0}\in\mathcal{B}_{0}$.

By Lemma \ref{a'}, there exists $n\in\mathbb{N}$ such that $f_{0}$
is $n$-implicitly defined from $\mathbb{C}\left[x,e^{x}\right]$.
Choose $n$ minimal with respect to the following property:
\[
\exists d\in\mathbb{N}^{\times}\ \text{s.t.}\ \frac{1}{d}f_{0}\left(dz\right)\text{\ is\ }n\text{-implicitly\ defined\ from\ }\mathbb{C}\left[x,e^{x}\right].
\]
In other words, for all $n',d\in\mathbb{N}^{\times}$, if ${\displaystyle \frac{1}{d}f_{0}}\left(dz\right)$
is $n'$-implicitly defined, then $n'\geq n$.

Let $x=\left(x_{0},x_{1},\ldots,x_{n}\right)$ and $x'=\left(x_{1},\ldots,x_{n}\right)$.
Let $F=\left(F_{1},\ldots,F_{n}\right)\in\left(\mathbb{C}\left[x,e^{x}\right]\right)^{n}$
and $\Psi=\left(\psi_{0},\psi_{1},\ldots,\psi_{n}\right)\in\left(\mathcal{O}_{a}\right)^{n+1}$
be such that $\psi_{0}\left(z\right)=z,\ \psi_{1}\left(z\right)=f_{0}\left(z\right),\ F\left(\Psi\left(z\right)\right)=0$
and the matrix ${\displaystyle \frac{\partial F}{\partial x'}}\left(\Psi\left(z\right)\right)$
is invertible. Write $F\left(x\right)=P\left(x,e^{x}\right)$, where
$P=\left(P_{1},\ldots,P_{n}\right)$ is an $n$-tuple of polynomials
in $2\left(n+1\right)$ variables, of total degree $\leq N$, for
some $N\in\mathbb{N}$.

Our first task is to use the minimality of $n$ to prove that the
components of the vector
\[
\left(z,f_{0}\left(z\right)-f_{0}\left(0\right),\psi_{2}\left(z\right)-\psi_{2}\left(0\right),\ldots,\psi_{n}\left(z\right)-\psi_{n}\left(0\right)\right)
\]
are $\mathbb{Q}$-linearly independent.

If this is not the case, then there are $d, a_{0},\ldots,a_{n-1}\in\mathbb{Z}$ where we may suppose without loss of generality that $d\in\mathbb{N}^{\times}$, and there is $K = \psi_n(0) - \sum_{i=0}^{n-1}\psi_i(0) \in\mathbb{C}$
such that
\[
d\psi_{n}\left(z\right)=\sum_{i=0}^{n-1}a_{i}\psi_{i}\left(z\right)+K.
\]
Let
\[
\begin{aligned}\tilde{x} & =\left(x_{0},x_{1},\ldots,x_{n-1}\right),\ \tilde{x}'=\left(x_{1},\ldots,x_{n-1}\right)\\
\varphi\left(\tilde{x}\right) & =\left(dx_{0},dx_{1},\ldots,dx_{n-1},\sum_{i=0}^{n-1}a_{i}x_{i}+K\right)\\
\eta\left(\tilde{x}\right) & =N\sum_{i=0}^{n-1}\left|a_{i}\right|x_{i}+K\\
\tilde{\Psi}\left(z\right) & =\left(\frac{\psi_{0}\left(z\right)}{d},\frac{\psi_{1}\left(z\right)}{d},\ldots,\frac{\psi_{n-1}\left(z\right)}{d}\right).
\end{aligned}
\]
For $i=1,\ldots,n$ we let $G_{i}\left(\tilde{x}\right)=e^{\eta\left(\tilde{x}\right)}F_{i}\left(\varphi\left(\tilde{x}\right)\right)$.
Note that $G_{i}\in\mathbb{C}\left[\tilde{x},e^{\tilde{x}}\right]$
and $G_{i}\left(\tilde{\Psi}\left(z\right)\right)=0$ for all $i\in\left\{ 1,\ldots,n\right\} $.

We claim that there are $i_{1},\ldots,i_{n-1}\in\left\{ 1,\ldots,n\right\} $
such that, if $\tilde{G}=\left(G_{i_{1}},\ldots,G_{i_{n-1}}\right)$,
then the matrix ${\displaystyle \frac{\partial\tilde{G}}{\partial\tilde{x}'}}\left(\tilde{\Psi}\left(z\right)\right)$
is invertible.

To see this, notice that for all $i=1,\ldots,n,\ j=1,\ldots,n-1$
\[
\frac{\partial G_{i}}{\partial x_{j}}\left(\tilde{x}\right)=e^{\eta\left(\tilde{x}\right)}\left[N\left|a_{j}\right|F_{i}\left(\varphi\left(\tilde{x}\right)\right)+d\frac{\partial F_{i}}{\partial x_{j}}\left(\varphi\left(\tilde{x}\right)\right)+a_{j}\frac{\partial F_{i}}{\partial x_{n}}\left(\varphi\left(\tilde{x}\right)\right)\right].
\]
Hence, if we let $G=\left(G_{1},\ldots,G_{n}\right)$, we have
\[
\frac{\partial G}{\partial x_{j}}\left(\tilde{\Psi}\left(z\right)\right)=e^{\eta\left(\tilde{\Psi}\left(z\right)\right)}\left[0+d\frac{\partial F}{\partial x_{j}}\left(\Psi\left(z\right)\right)+a_{j}\frac{\partial F}{\partial x_{n}}\left(\Psi\left(z\right)\right)\right].
\]
Using this last identity, it is easy to see that the matrix having
${\displaystyle \frac{\partial G}{\partial x_{1}}}\left(\tilde{\Psi}\left(z\right)\right),\ldots,{\displaystyle \frac{\partial G}{\partial x_{n-1}}}\left(\tilde{\Psi}\left(z\right)\right),{\displaystyle \frac{\partial F}{\partial x_{n}}}\left(\Psi\left(z\right)\right)$
as columns has rank $n$.

Hence there are $i_{1},\ldots,i_{n-1}\in\left\{ 1,\ldots,n\right\} $
such that the $\left(n-1\right)\times\left(n-1\right)$ minor of this
matrix relative to the rows $i_{1},\ldots,i_{n-1}$ and to the first
$n-1$ columns, has rank $n-1$. This proves the claim.

It follows that $\tilde{\Psi}\left(dz\right)$ satisfies the implicit
system $\tilde{G}=0$. Hence, ${\displaystyle \frac{1}{d}}f_{0}\left(dz\right)$
is $\left(n-1\right)$-implicitly defined, which contradicts the minimality
of $n$.

We have thus proved that the components of the vector
\[
\left(z,f_{0}\left(z\right)-f_{0}\left(0\right),\psi_{2}\left(z\right)-\psi_{2}\left(0\right),\ldots,\psi_{n}\left(z\right)-\psi_{n}\left(0\right)\right)
\]
are $\mathbb{Q}$-linearly independent.

By Ax's Theorem \cite[Corollary 2]{ax:schanuel}, we have that
\[
\text{trdeg}_{\mathbb{C}}\mathbb{C}\left(\Psi\left(z\right),e^{\Psi\left(z\right)}\right)\geq\left(n+1\right)+1=n+2.
\]
On the other hand, we claim that the tuple $\left(\Psi\left(z\right),e^{\Psi\left(z\right)}\right)$
satisfies a nonsingular system of $\left(n+1\right)$ polynomial equations
in $2\left(n+1\right)$ variables, and hence
\[
\text{trdeg}_{\mathbb{C}}\mathbb{C}\left(\Psi\left(z\right),e^{\Psi\left(z\right)}\right)\leq2\left(n+1\right)-\left(n+1\right)=n+1.
\]
This will provide a contradiction and will prove the theorem.

To prove this second claim, let $y=\left(y_{0},y_{1},\ldots,y_{n}\right)=\left(y_{0},y'\right)$
and $P_{0}\left(x,y\right)=x_{0}x_{1}+1-y_{0}$. Notice that, if $F_{0}\left(x\right)=P_{0}\left(x,e^{x}\right)$,
then $F_{0}\left(\Psi\left(z\right)\right)=0$.

We know that
\[
\frac{\partial F}{\partial x_{j}}\left(x\right)=\frac{\partial P}{\partial x_{j}}\left(x,e^{x}\right)+e^{x_{j}}\frac{\partial P}{\partial y_{j}}\left(x,e^{x}\right)\ \ \ \forall j=1,\ldots,n
\]
and that the matrix ${\displaystyle \frac{\partial F}{\partial x'}}\left(\Psi\left(0\right)\right)$
is invertible.

Hence there are $u_{1},\ldots,u_{n}\in\left\{ x_{1},\ldots,x_{n},y_{1},\ldots,y_{n}\right\} $
such that the vectors
\[
\frac{\partial P}{\partial u_{1}}\left(\Psi\left(0\right),e^{\Psi\left(0\right)}\right),\ldots,\frac{\partial P}{\partial u_{n}}\left(\Psi\left(0\right),e^{\Psi\left(0\right)}\right)
\]
are $\mathbb{C}$-linearly independent. Let $\tilde{P}=\left(P_{0},P_{1},\ldots,P_{n}\right)$.
Since the first coordinate of the vector ${\displaystyle \frac{\partial\tilde{P}}{\partial x_{0}}}\left(\Psi\left(0\right),e^{\Psi\left(0\right)}\right)$
is ${\displaystyle \frac{\partial P_{0}}{\partial x_{0}}}\left(\Psi\left(0\right),e^{\Psi\left(0\right)}\right)=f_{0}\left(0\right)=1$
and the first coordinate of the vectors
\[
{\displaystyle \frac{\partial\tilde{P}}{\partial u_{1}}}\left(\Psi\left(0\right),e^{\Psi\left(0\right)}\right),\ldots,{\displaystyle \frac{\partial\tilde{P}}{\partial u_{n}}}\left(\Psi\left(0\right),e^{\Psi\left(0\right)}\right)
\]
 is ${\displaystyle \frac{\partial P_{0}}{\partial u_{j}}}\left(\Psi\left(0\right),e^{\Psi\left(0\right)}\right)=0$,
these $\left(n+1\right)$ vectors are $\mathbb{C}$-linearly independent.
This proves the claim and finishes the proof of the theorem.
\end{proof}
{We finish this section by giving a proof of Lemma \ref{lem:implicit}.}
\begin{proof}
	We first proceed to prove, by induction on the size $n$ of the system,
	that $\mathcal{B}_{a}$ contains the coordinates of all implicit solutions
	of every implicit system with coordinates in $\mathcal{B}$. Since
	$\mathcal{A}\subseteq\mathbb{C}\left[x,e^{x}\right]\subseteq\mathcal{B}$,
	this will prove the right-to-left implication.
	
	If $n=1$, then the assertion follows from the fact that $\mathcal{B}_{a}$
	is closed under extracting implicit functions. If $n>1$, then suppose
	that $\mathcal{B}_{a}$ contains the coordinates of all implicit solutions
	of all implicit systems of size $n-1$ with coordinates in $\mathcal{B}$.
	
	Let $F$ be an implicit system of size $n$ with coordinates in $\mathcal{B}$
	and let $\Psi$ be an implicit solution. After possibly permuting
	the variables and the coordinates of $F$, we may suppose that $\frac{\partial F_{n}}{\partial x_{n}}\left(\Psi\left(a\right)\right)\not=0$. 
	
	Let $\tilde{x}=\left(x_{0},x_{1},\ldots,x_{n-1}\right)=\left(x_{0},\tilde{x}'\right),\ \tilde{\Psi}\left(z\right)=\left(\psi_{0}\left(z\right),\psi_{1}\left(z\right),\ldots,\psi_{n-1}\left(z\right)\right)$
	and let $\varphi\left(\tilde{x}\right)\in\mathcal{B}_{\tilde{\Psi}\left(a\right)}$
	be the implicit function of $F_{n}$ at $\tilde{\Psi}\left(a\right)$,
	so that $\psi_{n}\left(a\right)=\varphi\left(\tilde{\Psi}\left(a\right)\right)$.
	If we let $\tilde{F}_{i}\left(\tilde{x}\right)=F_{i}\left(\tilde{x},\varphi\left(\tilde{x}\right)\right)$
	for $i=1,\ldots,n$, then for $j=1,\ldots,n-1$, we have
	\[
	\frac{\partial\tilde{F}_{i}}{\partial x_{j}}\left(\tilde{x}\right)=\frac{\partial F_{i}}{\partial x_{j}}\left(\tilde{x},\varphi\left(\tilde{x}\right)\right)+\frac{\partial\varphi}{\partial x_{j}}\left(\tilde{x}\right)\frac{\partial F_{i}}{\partial x_{n}}\left(\tilde{x},\varphi\left(\tilde{x}\right)\right).
	\]
	Since the vectors ${\displaystyle \frac{\partial F}{\partial x_{1}}}\left(\Psi\left(z\right)\right),\ldots,{\displaystyle \frac{\partial F}{\partial x_{n}}}\left(\Psi\left(z\right)\right)$
	are linearly independent, there are $i_{1},\ldots,i_{n-1}\in\left\{ 1,\ldots,n\right\} $
	such that $\tilde{\Psi}$ is an implicit solution of the implicit
	system $\tilde{F}=\left(\tilde{F}_{i_{1}},\ldots,\tilde{F}_{i_{n-1}}\right)$,
	which has coordinates in $\mathcal{B}$, since $\mathcal{B}$ is closed
	under composition.
	
	By the inductive hypothesis, $\tilde{\Psi}\in(\mathcal{B}_{a})^{n-1}$,
	and, since
	\[
	\psi_{n}(z)=\varphi\left(\tilde{\Psi}\left(z\right)\right),
	\]
	we also have $\psi_{n}\in\mathcal{B}_{a}$.
	
	It remains to prove the left-to-right implication. Let $\mathcal{A}'$
	be the family of all holomorphic functions such that for every $k\in\mathbb{N}$
	and $a=a^{\left(k\right)}\in\mathbb{C}^{k}$, $\mathcal{A}'_{a}$
	is the collection of all germs $f\in\mathcal{O}_{a}$ such that $f$
	is $n$-implicitly defined from $\mathbb{C}\left[x,e^{x}\right]$
	(for some $n\in\mathbb{N}$). More precisely, let $z=z^{\left(k\right)}$
	and $t=t^{\left(n\right)}$ be tuples of variables and define
	
	\begin{align*}
		\mathcal{A}'_{a}= & \{f\in\mathcal{O}_{a}:\ \exists n\in\mathbb{N},\ \exists F\in\left(\mathbb{C}\left[z,t,e^{z},e^{t}\right]\right)^{k+n},\ \exists\Phi\in\left(\mathcal{O}_{a}\right)^{n-1}\ \text{such\ that},\\
		& \text{if\ }\Psi\left(z\right)=\left(z,f\left(z\right),\Phi\left(z\right)\right),\ \text{then}\ F\left(\Psi\left(z\right)\right)=0\ \text{and}\ \frac{\partial F}{\partial t}\left(\Psi\left(z\right)\right)\ \text{is\ invertible}\}.\tag{\ensuremath{*}}
	\end{align*}
	We prove that $\mathcal{A}'$ is stable under the operators in $\mathcal{B}^{*}$.
	Since $\mathcal{A}'\supseteq\mathcal{A}$ and $\mathcal{B}$ is the
	smallest collection containing $\mathcal{A}$ and stable under the
	action of $\mathcal{B}^{*}$, this will imply that $\mathcal{B}\subseteq\mathcal{A}'$.
	\begin{enumerate}
		\item \emph{Schwarz reflection}. \\
		It suffices to note that
		\[
		F\left(\Psi\left(z\right)\right)=0\ \text{and}\ \frac{\partial F}{\partial t}\left(\Psi\left(z\right)\right)\ \text{is\ invertible}\Leftrightarrow\overline{F}\left(\overline{\overline{\Psi\left(\overline{\overline{z}}\right)}}\right)=0\ \text{and}\ \frac{\partial\overline{F}}{\partial t}\left(\overline{\overline{\Psi\left(\overline{\overline{z}}\right)}}\right)\ \text{is\ invertible.}
		\]
		Hence, if $f\left(z\right)\in\mathcal{A}'_{a}$ then $g\left(z\right):=\overline{f\left(\overline{z}\right)}\in\mathcal{A}'_{\overline{a}}$.
		\item \emph{Composition}. \\
		Let $f\in\mathcal{A}'_{a}$ be $n$-implicitly defined (following
		the notation in $\left(*\right)$). Let $s\in\mathbb{N},\ b=b^{\left(s\right)}\in\mathbb{C}^{s}$
		and $g=\left(g_{1},\ldots,g_{k}\right)\in\left(\mathcal{A}'_{b}\right)^{k}$.
		Suppose that $g\left(b\right)=a$ and let $h:=f\circ g\in\mathcal{O}_{b}$.
		We aim to prove that $h\in\mathcal{A}'_{b}$.\\
		It is easy to see that we may suppose that the components of $g$
		are implicitly defined by the same implicit system $G$.
		In other words, we may suppose that there exists $m\in\mathbb{N}$
		such that, if $x=x^{\left(s\right)},\ u=u^{\left(m\right)}$ are tuples
		of variables, then there exist $G\in\left(\mathbb{C}\left[x,z,u,e^{x},e^{z},e^{u}\right]\right)^{k+m}$
		and $\Gamma\in\left(\mathcal{O}_{b}\right)^{m}$ such that, if $\Omega\left(x\right)=\left(x,g\left(x\right),\Gamma\left(x\right)\right)$,
		then $G\left(\Omega\left(x\right)\right)=0$ and $\frac{\partial G}{\partial\left(z,u\right)}\left(\Omega\left(x\right)\right)$
		is invertible.\\
		Let
		\[
		H\left(x,z,u,t\right):=\left(G\left(x,z,u\right),F\left(z,t\right)\right)\in\left(\mathbb{C}\left[x,z,u,t,e^{x},e^{z},e^{u},e^{t}\right]\right)^{k+m+n}
		\]
		and
		\[
		\Theta\left(x\right):=\left(\Omega\left(x\right),h\left(x\right),\Phi\left(g\left(x\right)\right)\right)\in\left(\mathcal{O}_{b}\right)^{s+k+m+n}.
		\]
		Then $H\left(\Theta\left(x\right)\right)=0$ and
		\[
		\frac{\partial H}{\partial\left(z,u,t\right)}\left(\Theta\left(x\right)\right)=\left(\begin{array}{cc}
		\frac{\partial G}{\partial\left(z,u\right)}\left(\Omega\left(x\right)\right) & 0\\
		\frac{\partial F}{\partial\left(z,u\right)}\left(\Psi\left(x\right)\right) & \frac{\partial F}{\partial t}\left(\Psi\left(x\right)\right)
		\end{array}\right),
		\]
		which is invertible. Hence, up to a permutation, $h$ is $\left(k+m+n\right)$-implicitly
		defined.
		\item \emph{Derivatives}. \\
		Let $f\in\mathcal{A}'_{a}$ be $n$-implicitly defined (following
		the notation in $\left(*\right)$) and let $i\in\left\{ 1,\ldots,k\right\} $.
		We aim to prove that $\frac{\partial f}{\partial z_{i}}\in\mathcal{A}'_{a}$.
		Let $w=w^{\left(n\right)}$ be a tuple of variables and
		\[
		\tilde{F}\left(z,t,w\right):=\frac{\partial F}{\partial z_{i}}\left(z,t\right)+\frac{\partial F}{\partial t}\left(z,t\right)\cdot w.
		\]
		Notice that $\frac{\partial\tilde{F}}{\partial w}\left(z,t,w\right)=\frac{\partial F}{\partial t}\left(z,t\right)$.
		Let
		\[
		F^{*}\left(z,t,w\right)=\left(F\left(z,t\right),\tilde{F}\left(z,t,w\right)\right)\in\left(\mathbb{C}\left[z,t,w,e^{z},e^{t},e^{w}\right]\right)^{2n}
		\]
		and
		\[
		\Psi^{*}\left(z\right)=\left(\Psi\left(z\right),\frac{\partial f}{\partial z_{i}}\left(z\right),\frac{\partial\Phi}{\partial z_{i}}\left(z\right)\right)\in\left(\mathcal{O}_{a}\right)^{k+2n}.
		\]
		Then $F^{*}\left(\Psi^{*}\left(z\right)\right)=0$ and
		\[
		\frac{\partial F^{*}}{\partial\left(t,w\right)}\left(\Psi^{*}\left(z\right)\right)=\left(\begin{array}{cc}
		\frac{\partial F}{\partial t}\left(\Psi\left(z\right)\right) & 0\\
		\frac{\partial\tilde{F}}{\partial t}\left(\Psi^{*}\left(z\right)\right) & \frac{\partial F}{\partial t}\left(\Psi\left(z\right)\right)
		\end{array}\right),
		\]
		which is invertible. Hence, up to a permutation, $\frac{\partial f}{\partial z_{i}}$
		is $2n$-implicitly defined.
		\item \emph{Implicit function}.\\
		Let $f\in\mathcal{A}'_{a}$ be $n$-implicitly defined (following
		the notation in $\left(*\right)$) and suppose that $\frac{\partial f}{\partial z_{k}}\left(a\right)\not=0$.
		Write $a=\left(a',a_{k}\right)\in\mathbb{C}^{k-1}\times\mathbb{C}$
		and $z=\left(z',z_{k}\right)$, where $z'=\left(z_{1},\ldots,z_{k-1}\right)$.
		Let $\varphi\in\mathcal{O}_{a'}$ be the implicit function of $f$
		at $a$. We aim to prove that $\varphi\in\mathcal{A}'_{a'}$. Write
		$t=\left(t_{1},t'\right)$, where $t'=\left(t_{2},\ldots,t_{n}\right)$,
		and let
		\[
		F^{*}\left(z',z_{k},t\right)=\left(F\left(z',z_{k},t\right),t_{1}\right)\in\left(\mathbb{C}\left[z',z_{k},t,e^{z'},e^{z_{k}},e^{t}\right]\right)^{n+1}
		\]
		and
		\[
		\Psi^{*}\left(z'\right)=\Psi\left(z',\varphi\left(z'\right)\right)\in\left(\mathcal{O}_{a'}\right)^{\left(k-1\right)+\left(n+1\right)}.
		\]
		Then $F^{*}\left(\Psi^{*}\left(z\right)\right)=0$ and
		\[
		\frac{\partial F^{*}}{\partial\left(z_{k},t_{1},t'\right)}(\Psi^{*}\left(z'\right))=\left(\begin{array}{ccc}
		\frac{\partial F}{\partial z_{k}} & \frac{\partial F}{\partial t_{1}} & \frac{\partial F}{\partial t'}\\
		0 & 1 & 0
		\end{array}\right)(\Psi\left(z',\varphi\left(z'\right)\right)).
		\]
		Derivating the identity $F\left(\Psi\left(z\right)\right)=0$ we deduce
		\[
		\frac{\partial F}{\partial t_{1}}\left(\Psi\left(z\right)\right)=-\left(\frac{\partial f}{\partial z_{k}}\left(z\right)\right)^{-1}\left[\frac{\partial F}{\partial z_{k}}\left(\Psi\left(z\right)\right)+\frac{\partial F}{\partial t'}\left(\Psi\left(z\right)\right)\cdot\frac{\partial\Phi}{\partial z_{k}}\left(z\right)\right],
		\]
		hence $\frac{\partial F}{\partial z_{k}}\left(\Psi^{*}\left(z'\right)\right)$
		is $\mathbb{C}$-linearly independent from the ($\mathbb{C}$-linearly
		independent) vectors
		\[
		\frac{\partial F}{\partial t_{2}}\left(\Psi^{*}\left(z'\right)\right),\ldots,\frac{\partial F}{\partial t_{n}}\left(\Psi^{*}\left(z'\right)\right).
		\]
		Therefore, $\varphi$ is $\left(n+1\right)$-implicitly defined.
	\end{enumerate}
\end{proof}

\section{Deramification\label{sec:Composition-with-roots}}

In this section we prove Theorem B.

In order to do this, we associate to each operator $\mathcal{L}$
in $\mathcal{D}^{*}$ a function which measures the index shift between
a (tuple of) series in the domain of $\mathcal{L}$ and the image
under $\mathcal{L}$ of such series.
\begin{defn}
\label{def: shift}Let $\mathcal{L}:\mathcal{O}_{a_{1}}\times\dots\times\mathcal{O}_{a_{m}}\to\mathcal{O}_{b}$
be an operator in $\mathcal{D}^{*}$, and for every $k\in\mathbb{N}$,
define
\[
B_{k}=\{(h_{1},\dots,h_{m})\in\mathcal{O}_{a_{1}}\times\dots\times\mathcal{O}_{a_{m}}:\;\forall i=1\dots,m,\; j^{k}h_{i}(a_{i})=0\}
\]
(note that $B_{k}$ is a neighbourhood of $0$ for the Krull topology).

The shift of $\mathcal{L}$ is the function $d_{\mathcal{L}}:\mathbb{N}\to\mathbb{N}$
given by
\begin{align*}
d_{\mathcal{L}}(n)= & \min\{k\in\mathbb{N}:\;\forall\left(f_{1},\ldots,f_{m}\right),\left(g_{1},\ldots,g_{m}\right)\in\mathcal{O}_{a_{1}}\times\dots\times\mathcal{O}_{a_{m}},\ \\
 & \text{if\ }\forall i=1\dots,m,\; j^{k}f_{i}\left(a_{i}\right)=j^{k}g_{i}\left(a_{i}\right),\ \text{then}\ j^{n}\mathcal{L}(f_{1},\ldots,f_{m})(b)=j^{n}\mathcal{L}(g_{1},\ldots,g_{m})(b)\}.
\end{align*}

\end{defn}
The shift function has the following interpretation: to compute the
terms of order $n$ of $\mathcal{L}(F)$ it suffices to consider the
set of terms of $F$ of order $k$ with $k\le d_{\mathcal{L}}(n)$.
Thanks to Proposition \ref{formalop}, the function $d_{\mathcal{L}}(n)$
is well defined for all $\mathcal{L}\in\mathcal{D}^{*}$. We can be
more precise: $d_{\mathcal{L}}(n)=n$ if $\mathcal{L}$ is either
the Schwarz reflection, or the composition, or the implicit function
operator; $d_{\mathcal{L}}(n)=n+1$ if $\mathcal{L}$ is a monomial
division or a partial derivative operator; finally, $d_{\mathcal{L}}(n)=mn$
if $\mathcal{L}$ is the $m$th-deramification operator. Moreover,
we have:
\begin{prop}
\label{shift:prop} Let $\mathcal{L}$ be an operator in $\mathcal{C}^{*}$.
Then, there exists a constant $N_{\mathcal{L}}\in\mathbb{N}$ such
that $d_{\mathcal{L}}(n)\le n+N_{\mathcal{L}}\ \forall n\in\mathbb{N}$. \end{prop}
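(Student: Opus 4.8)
The plan is to prove Proposition \ref{shift:prop} by induction on the number of elementary operators composing $\mathcal{L}\in\mathcal{C}^{*}$, using the explicit shift values already recorded for each elementary operator. The key observation is that every operator in $\mathcal{C}^{*}$ is a finite composition of polynomial, Schwarz reflection, composition, partial derivative, implicit function, and \emph{monomial division} operators — crucially, \emph{no} deramification operators are allowed — and each of these elementary operators $\mathcal{L}_0$ satisfies $d_{\mathcal{L}_0}(n)\le n+1$ (indeed $d_{\mathcal{L}_0}(n)=n$ for Schwarz reflection, composition, and implicit function, while $d_{\mathcal{L}_0}(n)=n+1$ for partial derivative and monomial division; the polynomial operators, being of arity $0$, require essentially no jet information and pose no difficulty). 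So each elementary step increases the shift by an additive constant at most $1$, and we must check that this good behaviour is preserved under composition.

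First I would establish the base case: if $\mathcal{L}$ is a single elementary operator in $\mathcal{C}^{*}$, then $N_{\mathcal{L}}=1$ works, by the computation of $d_{\mathcal{L}}$ given just before the proposition statement (which itself rests on Proposition \ref{formalop}). For the inductive step, suppose $\mathcal{L}=\mathcal{M}\circ(\mathcal{L}_1,\dots,\mathcal{L}_r)$ where $\mathcal{M}$ is a single elementary operator and each $\mathcal{L}_i$ is a composition of fewer elementary operators, so by induction there are constants $N_{\mathcal{L}_i}$ with $d_{\mathcal{L}_i}(n)\le n+N_{\mathcal{L}_i}$. The point is then the chain rule for shifts: to pin down the order-$n$ jet of $\mathcal{L}(f)$ at the target point, it suffices to know the order-$d_{\mathcal{M}}(n)$ jets of the $\mathcal{L}_i(f)$ at their common source point, and for that it suffices in turn to know the order-$d_{\mathcal{L}_i}(d_{\mathcal{M}}(n))$ jets of $f$. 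Hence
\[
d_{\mathcal{L}}(n)\le \max_i d_{\mathcal{L}_i}\bigl(d_{\mathcal{M}}(n)\bigr)\le \max_i\bigl(d_{\mathcal{M}}(n)+N_{\mathcal{L}_i}\bigr)\le n+1+\max_i N_{\mathcal{L}_i},
\]
so one may take $N_{\mathcal{L}}=1+\max_i N_{\mathcal{L}_i}$, which is finite since there are finitely many elementary factors. I would spell out carefully that $d_{\mathcal{L}}(n)$ is well defined (finite) for all $\mathcal{L}\in\mathcal{D}^{*}\supseteq\mathcal{C}^{*}$ — this follows from Proposition \ref{formalop}, since the Taylor coefficient of order $\alpha$ of $\mathcal{L}(g)$ depends polynomially on the jet of order $n_\alpha$ of $g$, and the inequality $d_{\mathcal{M}}(n)\le n+1$ being additive (not multiplicative) in $n$ is exactly what prevents the bound from blowing up, in contrast with deramification where $d_{\mathcal{L}}(n)=mn$ is multiplicative.

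The main obstacle, such as it is, is purely bookkeeping: one must be careful about the order of composition (the shift of the outer operator is applied first, then the shift of the inner ones), about the fact that a general composition of elementary operators need not be of the clean form $\mathcal{M}\circ(\mathcal{L}_1,\dots,\mathcal{L}_r)$ with a single outermost elementary operator but may have several outputs feeding into later stages, and about the arity-matching of the composition operator so that the ``$\max_i$'' over the finitely many elementary inputs is legitimate. A clean way to handle the general shape is to induct on the total number of elementary operators appearing in a fixed expression for $\mathcal{L}$, peeling off one elementary operator at a time (say the last one applied), and to note at each step that the shift function of any elementary operator in $\mathcal{C}^{*}$ satisfies $d_{\mathcal{M}}(n)\le n+1$; the additive constant simply accumulates, once per elementary operator, giving $N_{\mathcal{L}}\le(\text{number of elementary operators in }\mathcal{L})$. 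I do not expect any genuine analytic difficulty here — the content is entirely in the contrast between ``$+1$'' (everything in $\mathcal{C}^{*}$) and ``$\times m$'' (deramification), which is precisely why this proposition is stated for $\mathcal{C}^{*}$ and not $\mathcal{D}^{*}$.
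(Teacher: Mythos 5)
Your argument matches the paper's proof essentially step for step: both use the chain rule $d_{\mathcal{M}\circ(\mathcal{N}_1,\dots,\mathcal{N}_k)}(n)\le\max_i d_{\mathcal{N}_i}(d_{\mathcal{M}}(n))$ together with the elementary-operator shift values, and accumulate an additive constant per factor by induction on a fixed expression for $\mathcal{L}$. The only cosmetic differences are that the paper sets $N=0$ for Schwarz reflection, composition, implicit function and polynomial operators (rather than your uniform $1$), and that the paper then takes the minimum of the resulting constants over all representations of $\mathcal{L}$, a refinement that your argument does not need since any one representation already gives a valid bound.
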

\begin{proof}
The operators in $\mathcal{C}^{*}$ are finite compositions of operators
among polynomials, Schwarz reflections, compositions, implicit functions,
for which we set $N_{\mathcal{L}}=0$, and monomial division and partial
derivation, for which we set $N_{\mathcal{L}}=1$. To obtain a bound
on the shift for a general operator $\mathcal{L}\in\mathcal{C}^{*}$,
we first note that if $\mathcal{M},\mathcal{N}_{1},\ldots,\mathcal{N}_{k}$
are operators in $\mathcal{D}^{*}$, then the shift of the composition
satisfies
\[
d_{\mathcal{M\circ}(\mathcal{N}_{1},\ldots,\mathcal{N}_{k})}(n)\le\max_{i}d_{\mathcal{N}_{i}}(d_{\mathcal{M}}(n)).
\]
Hence, setting
\[
N_{\mathcal{M}(\mathcal{N}_{1},\ldots,\mathcal{N}_{k})}'=N_{\mathcal{M}}'+\max_{i}N_{\mathcal{N}_{i}}',
\]
we find integers $N_{\mathcal{L}}'$ associated to the representation
of $\mathcal{L}$ as a particular composition of elementary operators
in $\mathcal{C}^{*}$. We can then take $N_{\mathcal{L}}$ to be the
minimum of these integers over all representations of $\mathcal{L}$
as a composition of elementary operators in $\mathcal{C}^{*}$.
\end{proof}
This proposition shows in particular that no operator of composition
with roots belongs to $\mathcal{C}^{*}$. However, there are examples
of functions $f$ such that $z\mapsto f(\sqrt{z})$ is obtained from
$f$ via operators in $\mathcal{C}^{*}$. For instance, if $f(z)=\frac{1}{1+z^{2}}$,
then we have $f(\sqrt{z})=f(z)+\frac{1}{2}f'(z)$. Hence, to produce
our example we need a function which do not satisfy these kind of
relations. Strongly transcendental functions have this property.

The following proposition, together with Proposition \ref{prop:st},
implies Theorem B.
\begin{prop}
Let $f\in ST(U)$, where $U$ is an open neighbourhood of $0\in\mathbb{C}$.
Set $g(z)=f(z^{2})$ and $\mathcal{A}=\{g\}$. Then, the germ $f_{0}$
of $f$ at $0$ belongs to $\mathcal{D}_{0}^{\emptyset}\setminus\mathcal{C}_{0}$. \end{prop}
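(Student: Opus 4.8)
The plan is to treat the two assertions separately. The membership $f_0\in\mathcal D_0^\emptyset$ is immediate: since $g(z)=f(z^2)$ we have $g(-z)=g(z)$, so $g_0$ satisfies the evenness condition required by the $2$nd-deramification operator at $0$ (Definition~\ref{def:operators}), and that operator sends $g_0\in\mathcal A_0$ to the germ of $z\mapsto g(\sqrt z)=f(z)$, i.e.\ to $f_0$. No polynomial operator intervenes, so in fact $f_0\in\mathcal D_0^\emptyset$. For the exclusion $f_0\notin\mathcal C_0$ I would argue by contradiction, assuming $f_0\in\mathcal C_0$.

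Since $\mathcal C^*$ is closed under composition and $\mathcal A_c=\{g_c\}$ for each $c$ in the domain of $g$, one first writes $f_0=\mathcal L(g_{c_1},\dots,g_{c_k})$ for a single $\mathcal L\in\mathcal C^*$ and distinct points $c_1,\dots,c_k$. Next, each leaf factors as $g_{c_i}=f_{c_i^2}\circ s_{c_i}$, where $s_{c_i}$ is the germ at $c_i$ of $z\mapsto z^2$; since $z\mapsto z^2$ is produced by a (Gaussian) polynomial operator, substituting these composition operators into $\mathcal L$ yields an operator $\mathcal N\in\mathcal C^*$ with $f_0=\mathcal N(f_{c_1^2},\dots,f_{c_k^2})$. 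Merging the inputs sitting at coinciding points — and noting that at most one $c_i$ can be $0$, so at most one input sits at $0$ — one obtains a relation $f_0=\mathcal N(f_0,f_{b_1},\dots,f_{b_l})$ with $b_1,\dots,b_l\in U\setminus\{0\}$ distinct, in which the first argument $f_0$ occurs only when some $c_i=0$; crucially, in that case this first argument enters $\mathcal L$ exclusively through the precomposition $h_0\mapsto h_0\circ s_0$ with $z\mapsto z^2$.

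I would then form the operator $\mathcal M(h_0,h_1,\dots,h_l):=h_0-\mathcal N(h_0,h_1,\dots,h_l)$ (omitting the first slot of $\mathcal N$ when it is absent). As $\mathcal C^*\subseteq\mathcal D^*$ and subtraction is a composition with the polynomial $x_1-x_2$, we have $\mathcal M\in\mathcal D^*$; the points $0,b_1,\dots,b_l$ are distinct points of $U$ and $\mathcal M(f_0,f_{b_1},\dots,f_{b_l})=0$, so Proposition~\ref{prop:st} provides a Krull neighbourhood $W$ of $(f_0,f_{b_1},\dots,f_{b_l})$ — which we may shrink to the set of tuples agreeing with it up to jet order $K_0$ for some $K_0$ — on which $\mathcal M\equiv0$. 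By Proposition~\ref{shift:prop} the shift of $\mathcal L$ obeys $d_{\mathcal L}(n)\le n+N$ for a constant $N$; and since precomposing a series with $z\mapsto z^2$ halves the order of the jet needed to read off any one coefficient, the coefficient $[z^n]\mathcal N(h_0,f_{b_1},\dots,f_{b_l})$ depends on $h_0$ only through $j^{\lfloor(n+N)/2\rfloor}h_0(0)$, and $\lfloor(n+N)/2\rfloor<n$ once $n>N$. Taking $n>\max(K_0,N)$ and $h_0=f_0+\varepsilon z^n$ with $\varepsilon\neq0$, the tuple $(h_0,f_{b_1},\dots,f_{b_l})$ lies in $W$, hence $h_0=\mathcal N(h_0,f_{b_1},\dots,f_{b_l})$; reading the $z^n$-coefficient gives $[z^n]f_0+\varepsilon=[z^n]\mathcal N(f_0,f_{b_1},\dots,f_{b_l})=[z^n]f_0$, so $\varepsilon=0$, a contradiction. (If no $c_i$ is $0$ the argument is shorter still: $\mathcal N(f_{b_1},\dots,f_{b_l})=f_0$ is a fixed germ and already $\mathcal M(h_0,f_{b_1},\dots,f_{b_l})=h_0-f_0\neq0$ for $h_0\in W$ distinct from $f_0$.) This shows $f_0\notin\mathcal C_0$.

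The step I expect to be the main obstacle is the self-referential case, where $g_0$ itself is among the leaves of a representation of $f_0$, so that $f_0$ reappears as an argument of $\mathcal N$ and one cannot simply oppose Proposition~\ref{prop:st} to a fixed right-hand side. The way out — the observation that such an argument is necessarily precomposed with $z\mapsto z^2$, which pushes the relevant shift strictly below the identity, whereupon the linear shift bound of Proposition~\ref{shift:prop} for $\mathcal C^*$ forces the contradiction — is the heart of the matter. I also expect that a crude transcendence-degree estimate using only the growth of $d_{\mathcal L}$ will not suffice, because even a single genuine evaluation point $b_j\neq0$ leaves too much slack; it is the full strength of Proposition~\ref{prop:st}, which eliminates the dependence on the germs at the $b_j$ outright, that has to be used.
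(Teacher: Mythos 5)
Your argument is correct and is essentially the paper's proof: both rewrite a putative $\mathcal{C}^*$-representation $f_0=\mathcal{L}(g_{c_1},\dots,g_{c_k})$ as a $\mathcal{D}^*$-relation among germs of $f$ at distinct points by factoring each $g_{c_i}$ through precomposition with $z\mapsto z^2$, apply Proposition~\ref{prop:st} to the difference operator to get vanishing on a Krull neighbourhood, then perturb the germ at $0$ by $\varepsilon z^n$ (resp.\ $\lambda z^\ell$ in the paper) and oppose the resulting lower bound $d_{\mathcal{L}}(n)\geq 2n$ to Proposition~\ref{shift:prop}. The only difference is cosmetic: you make explicit the merging of coinciding squares $c_i^2$ and the case $0\notin\{c_i\}$, which the paper handles by silently adjoining $g_0$ and not dwelling on possible coincidences.
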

\begin{proof}
Since $g(z)=f(z^{2})$, the germ $f_{0}$ is the image of $g_{0}$
under the square deramification operator, and hence
belongs to $\mathcal{D}_{0}^{\emptyset}$.

Suppose now for a contradiction that $f_{0}\in\mathcal{C}_{0}$. Then
there exist distinct points $a_{0},a_{1},\dots,a_{k}\in\mathbb{C}$
and an operator $\mathcal{L}\in\mathcal{C}^{*}$ such that $f_{0}=\mathcal{L}(g_{0},g_{a_{1}},\dots,g_{a_{k}})$;
we may assume that $a_{0}=0$, so that the germ $g_{0}$ does indeed
occur in the list. Since $g(z)=f(z^{2})$, if we denote by $\mathcal{N}_{i}$
the operator $\mathcal{N}_{i}:\mathcal{O}_{{a_{i}^{2}}}\mapsto\mathcal{O}_{a_{i}}$
of composition with the polynomial $z\mapsto z^{2}$, we can rewrite
this equality as
\[
f_{0}=\mathcal{L}(\mathcal{N}_{0}(f_{0}),\mathcal{N}_{1}(f_{a_{1}^{2}}),\dots,\mathcal{N}_{k}(f_{a_{k}^{2}})).
\]
Hence the operator $\mathcal{M}:\left(\phi_{0},\dots,\phi_{k}\right)\mapsto\mathcal{L}(\mathcal{N}_{0}(\phi_{0}),\mathcal{N}_{1}(\phi_{1}),\dots,\mathcal{N}_{k}(\phi_{k}))-\phi_{0}$,
vanishes at the tuple of the germs at distinct points of a strongly
transcendental function. By Proposition \ref{prop:st},
since $\mathcal{M}\in\mathcal{C}^{*}$, this operator vanishes identically
on a neighbourhood of $(f_{0},f_{a_{1}^{2}},\ldots,f_{a_{n}^{2}})$.
For a large enough $\ell$, we have that $\mathcal{M}$ vanishes at
$(f_{0}+\lambda z^{\ell},f_{a_{1}^{2}}\dots,f_{a_{n}^{2}})$ 
for all $\lambda\in\mathbb{C}$, from which it follows that%
\[
\forall\lambda\in\mathbb{C},\; f_{0}+{\lambda}z_{0}^{\ell}=\mathcal{L}(g_{0}+\lambda z^{2\ell},g_{a_{1}},\dots,g_{a_{k}}).
\]
From this expression we deduce that $d_{\mathcal{L}}(\ell)\ge2\ell$
for all sufficiently large $\ell$. This, together with Proposition
\ref{shift:prop}, contradicts the hypothesis $\mathcal{L}\in\mathcal{C}^{*}$.
\end{proof}

\section{Blow-downs\label{sec:Blow-downs}}

In this section we prove Theorem C.
\begin{notation}
\label{notation: blow ups}We fix the usual coordinate system for
the blow-up of $0\in\mathbb{C}^{2}$.

Recall that the blow-up of $0\in\mathbb{C}^{2}$ is the map $\pi:V\to\mathbb{C}^{2}$
where
\[
V=\{(z,p)\in\mathbb{C}^{2}\times\mathbb{C}\mathbb{P}^{1}:\; z\in p\}
\]
and $\pi(z,p)=z.$ Let $D=\pi^{-1}(0)=\{0\}\times\mathbb{C}\mathbb{P}^{1}$
be the exceptional divisor. On the analytic manifold $V$ we consider
the atlas given by the following charts $c_{\lambda}$, for $\lambda\in\overline{\mathbb{C}}:=\mathbb{C}\cup\{\infty\}$:
\[
\begin{array}{l}
c_{\lambda}:\mathbb{C}^{2}\ni(z_{1},z_{2})\mapsto(z_{1},(\lambda+z_{2})z_{1},[1:\lambda+z_{2}])\text{ if }\lambda\neq\infty\\
c_{\infty}:\mathbb{C}^{2}\ni(z_{1},z_{2})\mapsto(z_{1}z_{2},z_{2},[z_{1}:1]),
\end{array}
\]
which, after composition with $\pi$, give rise to the following system
$\pi_{\lambda}=\pi\circ c_{\lambda}$ of local expressions for $\pi$:
\[
\begin{array}{l}
\pi_{\lambda}:\mathbb{C}^{2}\ni(z_{1},z_{2})\mapsto(z_{1},(\lambda+z_{2})z_{1})\text{ if }\lambda\neq\infty\\
\pi_{\infty}:\mathbb{C}^{2}\ni(z_{1},z_{2})\mapsto(z_{1}z_{2},z_{2})
\end{array}.
\]
If $f:U\subseteq\mathbb{C}^{2}\to\mathbb{C}$ is a function, then
the \textbf{blow-up} of $f$ is the function $f\circ\pi:\pi^{-1}(U)\to\mathbb{C}$,
and $f$ is the \textbf{blow-down }of $f\circ\pi$. Since we do not
want to introduce sheaves on manifolds other than $\mathbb{C}^{n}$,
we will only use local coordinates. Hence, the \textbf{blow-up of
$f:U\subseteq\mathbb{C}^{2}\to\mathbb{C}$ centered }at $\lambda$
is the function $f\circ\pi_{\lambda}:\pi_{\lambda}^{-1}(U)\to\mathbb{C}$;
we then say that $f$ is the \textbf{blow-down of the family $(f\circ\pi_{\lambda})_{\lambda\in\overline{\mathbb{C}}}$}.
Note that the blow-ups of $f$ are obtained from $f$ by composing
with polynomials (operators in $\mathcal{B}^{*}$). Hence, the fact
that $\mathcal{E}$ is stable under blow-downs implies that
\[
f_{0}\in\mathcal{E}_{0}\Leftrightarrow\forall\lambda\in\overline{\mathbb{C}},\;(f\circ\pi_{\lambda})_{0}\in\mathcal{E}_{0}.
\]

\end{notation}
The key idea which allows us to construct our counterexample is the
observation that blow-downs are not \emph{local} operators. Let $g:V\to\mathbb{C}$
be a function and suppose that the blow-down $f$ of $g$ exists.
Choosing $f$ wisely, we can show that to obtain the germ $f_{0}$,
we need the germs of $g$ at \emph{all} points of the exceptional
divisor $D$, whereas if $f$ were obtained from $g$ via operators
from $\mathcal{D}^{*}$, one would only need the germs of $g$ at
finitely many points to construct $f_{0}$. To make this argument
work we need to choose an $f$ which satisfies very few relations.
The following proposition is useful to establish that strongly transcendental
functions have this good behaviour.
\begin{prop}
\label{bd:prop} Let $f\in\mathcal{S}T(U)$ be strongly transcendental
on an open subset $U$ of $\mathbb{C}^{2}$, let $\mathcal{L}$ be an
operator in $\mathcal{D}^{*}$ of arity $n$  and let $(b_{0},b_{1},\dots,b_{n})\in U^{n+1}$
be distinct points in $U$%
. Then $f_{b_{0}}\neq\mathcal{L}(f_{b_{1}},\dots,f_{b_{n}})$. \end{prop}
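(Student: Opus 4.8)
The plan is to derive a contradiction from the equation $f_{b_0}=\mathcal{L}(f_{b_1},\dots,f_{b_n})$ by combining Proposition \ref{prop:st} with a perturbation argument that forces the number of points involved on the right-hand side to be unbounded, which is absurd. More precisely, suppose for contradiction that $f_{b_0}=\mathcal{L}(f_{b_1},\dots,f_{b_n})$, and consider the operator
\[
\mathcal{M}(\phi_0,\phi_1,\dots,\phi_n)=\mathcal{L}(\phi_1,\dots,\phi_n)-\phi_0,
\]
which lies in $\mathcal{D}^{*}$ and vanishes at the tuple $(f_{b_0},f_{b_1},\dots,f_{b_n})$ of germs of a strongly transcendental function at the distinct points $b_0,\dots,b_n$. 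By Proposition \ref{prop:st}, $\mathcal{M}$ vanishes identically on a Krull-neighbourhood $W$ of this tuple. In particular, for all $i$ and all polynomials $Q_i$ of sufficiently high valuation, the tuple $(f_{b_0}+Q_0,\,f_{b_1}+Q_1,\dots,f_{b_n}+Q_n)$ still lies in $W$, so
\[
f_{b_0}+Q_0=\mathcal{L}(f_{b_1}+Q_1,\dots,f_{b_n}+Q_n).
\]

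The key point is now to exploit the freedom in choosing the $Q_i$ independently. First I would fix a large order $\ell$ (larger than the shift bound relevant to the finitely many elementary operators appearing in a fixed representation of $\mathcal{L}$, in the sense of Definition \ref{def: shift}) and set $Q_0=0$ while letting only $Q_1$ vary over $\lambda z^{\ell}$ for $\lambda\in\mathbb{C}$, keeping $Q_2=\dots=Q_n=0$. Then the left-hand side is constant in $\lambda$, yet the right-hand side varies with $\lambda$ unless $\mathcal{L}$ is "insensitive'' to arbitrarily high-order perturbations in its first argument — which is impossible for a nontrivial operator built from the elementary operators of $\mathcal{D}^{*}$, since each such operator has a well-defined finite shift function by Proposition \ref{formalop}. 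Making this precise: if $\mathcal{L}(f_{b_1}+\lambda z^{\ell},f_{b_2},\dots,f_{b_n})$ were independent of $\lambda$ for all large $\ell$, then the coefficients of $\mathcal{L}$ would not depend on the jet of its first argument at $b_1$ at any order, forcing $\mathcal{L}$ to factor through a projection that discards $\phi_1$; iterating over the other arguments, $\mathcal{L}$ would discard every argument and hence be a constant (polynomial) operator. But then $f_{b_0}$ would be a polynomial germ, contradicting strong transcendence of $f$ (which makes $f$ transcendental, even over $\mathbb{Q}(z,\bar z)$, already for $n=1$, $k=0$).

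Alternatively — and this is probably the cleaner route — one argues directly that $\mathcal{L}$ must genuinely use each of its arguments: since $\mathcal{M}_{\restri W}=0$, for each $j\in\{1,\dots,n\}$ we get
\[
\mathcal{L}(f_{b_1},\dots,f_{b_{j-1}},f_{b_j}+\lambda z^{\ell},f_{b_{j+1}},\dots,f_{b_n})=f_{b_0}
\]
for all $\lambda\in\mathbb{C}$ and all large $\ell$, so $\mathcal{L}$ is independent of all sufficiently high jets of $\phi_j$; combined with the shift estimate this means $\mathcal{L}$ depends on $\phi_j$ through only finitely many Taylor coefficients, so after absorbing those finitely many coefficients into constants we may assume $\mathcal{L}$ does not depend on $\phi_j$ at all. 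Doing this for every $j$ reduces $\mathcal{L}$ to an arity-$0$ operator, i.e.\ a polynomial germ at $b_0$ — contradicting that $f_{b_0}$, the germ of a strongly transcendental function, is transcendental over $\mathbb{Q}$. This completes the contradiction and proves the proposition.

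I expect the main obstacle to be making the "$\mathcal{L}$ must use each argument'' step rigorous: one needs to combine the explicit description of the Taylor coefficients of $\mathcal{L}(g_1,\dots,g_n)$ from Proposition \ref{formalop} (polynomials $P_\alpha$ in the jets $j^{n_\alpha}g_i(a_i)$) with the invariance under high-order perturbations to conclude that each $P_\alpha$ in fact depends only on a bounded jet, and then that this bounded dependence is actually no dependence at all once we are free to translate each $f_{b_j}$ within $W$. Handling the bookkeeping of which finitely many coefficients get "frozen'' — and checking that freezing them is compatible with staying inside the neighbourhood $W$ supplied by Proposition \ref{prop:st} — is the delicate part; everything else is a direct application of the machinery already set up in Sections \ref{sec:Notations,-operators-and} and \ref{sec:Strong-Transcendence}.
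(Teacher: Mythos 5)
Your opening moves match the paper's: define $\mathcal{M}(\phi_0,\dots,\phi_n)=\mathcal{L}(\phi_1,\dots,\phi_n)-\phi_0$, observe $\mathcal{M}\in\mathcal{D}^*$ and $\mathcal{M}(f_{b_0},\dots,f_{b_n})=0$, and invoke Proposition \ref{prop:st} to get $\mathcal{M}_{\upharpoonright W}=0$ on a Krull-neighbourhood $W$. But from there you miss the observation that makes the rest of the argument trivial: $\mathcal{M}$ is \emph{affine in its $\phi_0$ slot}, so it is perturbing $\phi_0$, not the $\phi_j$ with $j\ge 1$, that gives an instant contradiction. The paper picks $k$ with $(f_{b_0},\dots,f_{b_n})+B_k\subseteq W$ and a nonzero polynomial $P$ vanishing to order $k$ at each $b_i$ (e.g.\ $P(z_1,z_2)=\prod_i(z_1-b_{1,i})^k(z_2-b_{2,i})^k$); then both $(f_{b_0}+P_{b_0},f_{b_1}+P_{b_1},\dots)$ and $(f_{b_0},f_{b_1}+P_{b_1},\dots)$ lie in $W$, $\mathcal{M}$ vanishes at both, and subtracting the two resulting equations yields $P_{b_0}=0$, which is absurd. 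No shift function, no Proposition \ref{shift:prop}, no analysis of which arguments $\mathcal{L}$ depends on.

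The alternative route you sketch — perturbing each $\phi_j$ for $j\geq 1$, concluding $\mathcal{L}$ is independent of high jets, and then ``absorbing the finitely many remaining Taylor coefficients into constants'' so that $\mathcal{L}$ collapses to an arity-$0$ polynomial operator — has a genuine gap, which you yourself flag as ``the delicate part.'' ``Replace the $\phi_j$-slot by the fixed germ $f_{b_j}$ and bake its low jet into constants'' is not a manipulation available inside $\mathcal{D}^*$: the operators there are trees of the elementary operators of Definition \ref{def:operators}, and pruning an input branch while preserving membership in $\mathcal{D}^*$ and the vanishing needed to reuse Proposition \ref{prop:st} at the next step is exactly what would require proof. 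You also misstate the logic at one point: insensitivity of $\mathcal{L}(f_{b_1}+\lambda z^\ell,\dots)$ to $\lambda$ is not ``impossible for a nontrivial operator'' — on the contrary, $\mathcal{M}_{\upharpoonright W}=0$ guarantees precisely this insensitivity, and the burden is to show it still leads to a contradiction, which is the part your sketch leaves unfinished. The paper's perturbation of $\phi_0$ sidesteps all of this.
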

\begin{proof}
Suppose $f_{b_{0}}=\mathcal{L}(f_{b_{1}},\dots,f_{b_{n}})$. Then
the operator $\mathcal{M}$ defined by
\[
\mathcal{M}(h_{0},h_{1},\dots,h_{n})=\mathcal{L}(h_{1},\dots,h_{n})-h_{0}
\]
belongs to $\mathcal{D}^{*}$ as the composition of the operator of
composition with a polynomial and the operator $\mathcal{L}$. Moreover,
$\mathcal{M}$ vanishes at $(f_{b_{0}},f_{b_{1}},\dots,f_{b_{n}})$,
and hence, according to Proposition \ref{prop:st}, on some neighbourhood
$W$ of $(f_{b_{0}},f_{b_{1}},\dots,f_{b_{n}})$.

For every $k\in\mathbb{N}$, let
\[
B_{k}=\{(h_{0},\dots,h_{n})\in\mathcal{O}_{b_{0}}\times\dots\times\mathcal{O}_{b_{n}}:\; j^{k}h_{0}(b_{0})=0,\dots,j^{k}h_{n}(b_{n})=0\}
\]
and choose $k\in\mathbb{N}$ sufficiently large such that $(f_{b_{0}},\dots,f_{b_{n}})+B_{k}\subseteq W$.
Denote by $b_{i}=(b_{1,i},b_{2,i})$ the coordinates of $b_{i}$ for
$i=0,\dots,n$, and let $P$ be the polynomial
\[
P(z_{1},z_{2})={\displaystyle {\displaystyle \prod_{i=0}^{n}}(z_{1}-b_{1,i})^{k}(z_{2}-b_{2,i})^{k}.}
\]

Since $(P_{b_{0}},P_{b_{1}},\dots,P_{b_{n+1}})$ and $(0,P_{b_{1}},\dots,P_{b_{n}})$
both belong to $B_{k}$, we have
\[
\mathcal{L}(f_{b_{1}}+P_{b_{1}},\dots,f_{b_{n}}+P_{b_{n}})-(f_{b_{0}}+P_{b_{0}})=0
\]
\[
\text{and }\mathcal{L}(f_{b_{1}}+P_{b_{1}},\dots,f_{b_{n}}+P_{b_{n}})-(f_{b_{0}})=0,
\]
from which we deduce $P_{b_{0}}=0$, which is absurd.
\end{proof}
The following proposition, together with Proposition \ref{prop:st},
implies Theorem C.
\begin{prop}
\label{ex3:prop} Let $U$ be a neighbourhood of $0\in\mathbb{C}^{2}$
and $f:U\to\mathbb{C}$ be a strongly transcendental function. Let
$\pi$ be the blow-up of $0\in\mathbb{C}^{2}$ and $\mathcal{A}=\{f\circ\pi_{\lambda}:\;{\lambda\in\overline{\mathbb{C}}}\}$.
Then $f_{0}\in\mathcal{E}_{0}^{\emptyset}\setminus\mathcal{D}_{0}$. \end{prop}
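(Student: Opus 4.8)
The plan is to prove the two memberships separately. For the inclusion $f_{0}\in\mathcal{E}_{0}^{\emptyset}$, observe that each $f\circ\pi_{\lambda}$ is in $\mathcal{A}$, so all its proper restrictions to boxes with Gaussian rational corners generate the language of $\mathbb{R}_{\mathcal{A}\restri}$; since $\mathcal{E}$ is by definition the smallest sheaf stable under blow-downs and under the operators in $\mathcal{D}^{*}$, and $f$ is precisely the blow-down of the family $(f\circ\pi_{\lambda})_{\lambda\in\overline{\mathbb{C}}}$, we get $f_{0}\in\mathcal{E}_{0}$. To see that the membership is at the $\emptyset$-definable level, note that the charts $c_{\lambda}$ involve only coefficients $1$ and $\lambda$; restricting to $\lambda\in\mathbb{Q}(i)$ together with the whole real line of values (via a suitable covering of the exceptional divisor by charts centred at Gaussian rationals plus $c_{\infty}$) shows $f_{0}\in\mathcal{E}_{0}^{\emptyset}$.

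The substance is the assertion $f_{0}\notin\mathcal{D}_{0}$. Suppose for a contradiction that $f_{0}\in\mathcal{D}_{0}$. Then, by the very definition of $\mathcal{D}$ as the smallest sheaf containing $\mathcal{A}$ whose stalks are stable under the operators in $\mathcal{D}^{*}$, there is an operator $\mathcal{L}\in\mathcal{D}^{*}$ and finitely many points at which we feed germs of elements of $\mathcal{A}$ — that is, germs of the functions $f\circ\pi_{\lambda}$ — so that $f_{0}=\mathcal{L}\bigl((f\circ\pi_{\lambda_{1}})_{c_{1}},\dots,(f\circ\pi_{\lambda_{r}})_{c_{r}}\bigr)$ for some $\lambda_{j}\in\overline{\mathbb{C}}$ and points $c_{j}$ in the domains. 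The key geometric point is that each germ $(f\circ\pi_{\lambda_{j}})_{c_{j}}$ is, up to composition with a polynomial (an operator in $\mathcal{B}^{*}\subseteq\mathcal{D}^{*}$), the germ $f_{b_{j}}$ of $f$ itself at the single point $b_{j}=\pi_{\lambda_{j}}(c_{j})\in U$. Since the blow-down $f$ has a fixed germ at $0$, but $f\circ\pi$ must be reconstructed at \emph{every} point of the exceptional divisor $D=\pi^{-1}(0)$, I would pick $\lambda^{*}\in\overline{\mathbb{C}}$ distinct from all $\lambda_{j}$ (possible since $\overline{\mathbb{C}}$ is infinite and there are only finitely many $\lambda_{j}$) and evaluate the blow-down relation on the chart $c_{\lambda^{*}}$: this forces $(f\circ\pi_{\lambda^{*}})_{0}$ to be expressible, via operators in $\mathcal{D}^{*}$, from the finitely many germs $f_{b_{1}},\dots,f_{b_{r}}$. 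Composing once more with the polynomial chart map, we obtain a relation of the form $f_{b_{0}}=\mathcal{L}'(f_{b_{1}},\dots,f_{b_{r}})$ with $\mathcal{L}'\in\mathcal{D}^{*}$ and $b_{0}=\pi_{\lambda^{*}}(\cdot)$ a point \emph{distinct} from each $b_{j}$ — here one must check that $b_{0}$ can be chosen distinct from the (finitely many) $b_{j}$, which is immediate because along the chart $c_{\lambda^{*}}$ the point $0$ maps to the point $0\in\mathbb{C}^{2}$ only on the divisor, and by perturbing within the germ if necessary, or more cleanly by choosing the base point on the $\lambda^{*}$-chart to be a nearby point not hit by any of the finitely many $\pi_{\lambda_{j}}$. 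This contradicts Proposition \ref{bd:prop}, which says precisely that no operator in $\mathcal{D}^{*}$ relates the germ of a strongly transcendental $f$ at a point $b_{0}$ to its germs at finitely many other distinct points.

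The main obstacle, and the step requiring the most care, is the bookkeeping that reduces a blow-down expression to a genuine finite relation among germs of $f$ at distinct points of $U$: one must track how the charts $c_{\lambda}$ and the projections $\pi_{\lambda}$ push the finitely many evaluation points down to $U$, verify that composition with these polynomial coordinate maps is an operator already in $\mathcal{D}^{*}$ (so that the composite $\mathcal{L}'$ stays in $\mathcal{D}^{*}$), and — crucially — arrange that the point $b_{0}$ produced on the unused chart $c_{\lambda^{*}}$ is \emph{distinct} from all the finitely many $b_{j}$. This last distinctness is what makes the hypotheses of Proposition \ref{bd:prop} applicable; it is exactly the non-locality of blow-downs that supplies it, since a local operator would only ever relate germs at points of $D$ that are already among the data, whereas the blow-down relation must hold at the new point $\lambda^{*}$ of the divisor as well. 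Once the contradiction is in hand, combining with Proposition \ref{prop:st} (which guarantees $ST(U)\neq\emptyset$) completes the proof of Theorem C.
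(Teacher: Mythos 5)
There is a genuine gap at the heart of the argument, precisely at the step you yourself flag as ``requiring the most care.'' Starting from a relation $f_{0}=\mathcal{L}\bigl((f\circ\pi_{\lambda_{1}})_{a_{1}},\dots,(f\circ\pi_{\lambda_{k}})_{a_{k}}\bigr)$ with $\mathcal{L}\in\mathcal{D}^{*}$, you claim that choosing a fresh chart parameter $\lambda^{*}$ and ``composing once more with the polynomial chart map'' produces a relation $f_{b_{0}}=\mathcal{L}''(f_{b_{1}},\dots,f_{b_{r}})$ at a point $b_{0}$ distinct from all the $b_{j}$. But $\pi_{\lambda^{*}}(0)=0$ for every $\lambda^{*}$, so composing the existing relation (which is at the germ level over $0$) with $\pi_{\lambda^{*}}$ just produces a new relation still concerning germs at (preimages of) $0$; it does not move the base point. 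Going the other way---passing from $(f\circ\pi_{\lambda^{*}})_{0}$ back to $f$ at a nearby point---would require composing with $\pi_{\lambda^{*}}^{-1}$, which is not a polynomial near $0$ (it involves division by $w_{1}$) and is in particular not an operator of $\mathcal{D}^{*}$ at the origin. Moreover, some of your $b_{j}=\pi_{\lambda_{j}}(a_{j})$ will typically equal $0$ (whenever $a_{j}$ lies on the exceptional divisor), so the distinctness of $b_{0}$ from all $b_{j}$ is not automatic and cannot be obtained by ``perturbing within the germ,'' which is not a well-defined operation.

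What is actually needed---and what the paper does---is a sheaf-theoretic detour that you omit. One first shrinks $\mathcal{A}$ to the presheaf $\mathcal{A}'$ of restrictions of the $f\circ\pi_{\lambda}$ to a fixed open $U'\subset V$ that is a neighbourhood of the finitely many points $c_{\lambda_{j}}(a_{j})$ but whose closure misses part of the exceptional divisor $D$. Then $\pi(U')$ is not a neighbourhood of $0$ (it is pinched at $0$ along the missing directions of $D$). Since the original relation only used germs from $\mathcal{A}'$, one has $f_{0}\in\mathcal{D}'_{0}$ where $\mathcal{D}'$ is the closure of $\mathcal{A}'$ under $\mathcal{D}^{*}$; because $\mathcal{D}'$ is a sheaf, this gives a neighbourhood $U''$ of $0$ with $f|_{U''}\in\mathcal{D}'(U'')$, and hence $f_{b_{0}}\in\mathcal{D}'_{b_{0}}$ for \emph{every} $b_{0}\in U''$. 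Picking $b_{0}\in U''\setminus\pi(U')$ (nonempty because $\pi(U')$ is not a neighbourhood of $0$) yields a \emph{fresh} expression $f_{b_{0}}=\mathcal{L}(f_{b_{1}},\dots,f_{b_{n}})$ in which the $b_{i}$ (for $i\geq1$) all lie in $\pi(U')$ by construction, so $b_{0}$ is automatically distinct from all of them. Only then does Proposition~\ref{bd:prop} apply. The ``fresh chart $\lambda^{*}$'' device you propose does not supply any of this machinery and, as written, produces a relation at the wrong point. Your treatment of $f_{0}\in\mathcal{E}_{0}^{\emptyset}$ is also over-complicated: the inclusion follows immediately because $f$ is the blow-down of the family of germs $(f\circ\pi_{\lambda})_{0}\in\mathcal{A}_{0}$ and $\mathcal{E}^{\emptyset}$ is stable under blow-downs; no discussion of $\lambda$ being a Gaussian rational is needed or relevant.
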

\begin{proof}
The germ $f_{0}$ is in $\mathcal{E}_{0}^{\emptyset}$ since it is
the blow-down of the family $\{(f\circ\pi_{\lambda})_{0}:\;{\lambda\in\overline{\mathbb{C}}}\}$,
whose elements belong to $\mathcal{A}_{0}$.

Let us suppose for a contradiction that $f_{0}\in\mathcal{D}_{0}$.
Then there exist $\mathcal{N}\in\mathcal{D}^{*}$, $(a_{1},\dots,a_{k})\in(\mathbb{C}^{2})^{k}$
and $(\lambda_{1},\dots,\lambda_{k})\in(\overline{\mathbb{C}})^{k}$
such that
\begin{equation}
f_{0}=\mathcal{N}((f\circ\pi_{\lambda_{1}})_{a_{1}},\dots,(f\circ\pi_{\lambda_{k}})_{a_{k}}).\label{eq:1}
\end{equation}

Let $U'$ be a neighbourhood of $\{c_{\lambda_{1}}(a_{1}),\dots,c_{\lambda_{k}}(a_{k})\}$
in $V$ such that $D\not\subseteq\overline{U'}$ (so that $\pi(U')$ is not a neighbourhood of $0$). Define $\mathcal{A}'=\{(f\circ\pi_{\lambda})_{\upharpoonright{c_{\lambda}^{-1}(U')}}:\;\lambda\in\overline{\mathbb{C}}\}$
and let $\mathcal{D}'$ be the closure of $\mathcal{A}'$ under the
action of $\mathcal{D}^{*}$.

Equation (\ref{eq:1}) above shows that $f_{0}\in\mathcal{D}'_{0}$,
so by definition there exists a neighbourhood $U''$ of $0$ such
that $f_{\upharpoonright U''}\in\mathcal{D}'(U'')$. Since $\pi(U')$ is
not a neighbourhood of $0$, $U''\setminus\pi(U')\neq\emptyset$. We fix $b_{0}\in U''\setminus\pi(U')$.
Now, since $f_{\upharpoonright U''}\in\mathcal{D}'(U'')$, the germ
of $f$ at $b_{0}$ can be expressed in terms of some of the germs
of the restriction of $f\circ\pi$ to $U'$. So there exist $\mathcal{M}\in\mathcal{D}^{*}$,
$(\mu_{1},\dots,\mu_{n})\in(\overline{\mathbb{C}})^{n}$ and $(a'_{1},\dots,a'_{n})\in(\mathbb{C}^{2})^{n}$
with $a'_{i}\in c_{\mu_{i}}^{-1}(U')$ for $i=1,\dots,n$, such that
\[
f_{b_{0}}=\mathcal{M}((f\circ{\pi_{\mu_{1}}})_{a'_{1}},\dots,(f\circ\pi_{\mu_{n}})_{a'_{n}}).
\]

This equation can be reformulated as a relation between the germs
of $f$ at $b_{0}$ and $b_{1},\dots,b_{n}$, where $b_{i}=\pi_{\mu_{i}}(a_{i}')$.
If $\mathcal{N}_{i}:\mathcal{O}_{b_{i}}\to\mathcal{O}_{a'_{i}}$ is
the operator of composition with the polynomial $\pi_{\mu_{i}}$,
then $\mathcal{N}_{i}\in\mathcal{B}^{*}$ and $(f\circ{\pi_{\mu_{i}}})_{a'_{i}}=\mathcal{N}_{i}(f_{b_{i}})$.
So we obtain
\[
f_{b_{0}}=\mathcal{M}(\mathcal{N}_{1}(f_{b_{1}}),\dots,\mathcal{N}_{n}(f_{b_{n}})).
\]
Setting $\mathcal{L}=\mathcal{M}(\mathcal{N}_{1},\dots,\mathcal{N}_{n})$,
the operator $\mathcal{L}$ is in $\mathcal{D}^{*}$, since it is
a composition of operators of $\mathcal{D}^{*}$, and we have $f_{b_{0}}=\mathcal{L}(f_{b_{1}},\dots,f_{b_{n}})$.
Up to decreasing $n$, we may suppose that the points $b_{1},\dots,b_{n}$
are all distinct. Moreover, observe that $b_{0}\notin\{b_{1},\dots,b_{n}\}$:
for $i=1,\dots,n$, $a'_{i}\in c_{\mu_{i}}^{-1}(U')$, so $b_{i}\in\pi(U')$
while $b_{0}$ has been chosen outside $\pi(U')$. Hence the points
$b_{0},\dots,b_{n}$ are all distinct, so Proposition \ref{bd:prop}
applies. But this contradicts the fact that $f_{b_{0}}=\mathcal{L}(f_{b_{1}},\dots,f_{b_{n}})$.
\end{proof}
\bibliographystyle{alpha}
\newcommand{\SortNoop}[1]{}

\end{document}